\documentclass{article}
\usepackage[utf8]{inputenc}

\usepackage{colonequals}
\usepackage{graphicx,verbatim}
\usepackage{amssymb}
\usepackage{amsthm}
\usepackage{bbm}
\usepackage{amsmath}
\usepackage{mathtools}
\usepackage{textgreek}
\usepackage{mathbbol}
\usepackage{amsfonts}
\usepackage{mathrsfs}
\usepackage[english]{babel}
\usepackage{lscape} 
\usepackage[vcentermath,enableskew]{youngtab}
\usepackage{tikz,tikz-cd}
\usepackage{tabu}
\usetikzlibrary{arrows,positioning,automata,shadows,fit,shapes}
\usepackage{ stmaryrd }
\usepackage[english]{babel}
\usepackage{setspace}
\usepackage{microtype}
\usepackage[hang,flushmargin]{footmisc} 

\newtheorem{theorem}{Theorem}[section]
\newtheorem{definition}[theorem]{Definition}
\newtheorem{example}[theorem]{Example}
\newtheorem{corollary}[theorem]{Corollary}
\newtheorem{lemma}[theorem]{Lemma}
\newtheorem{proposition}[theorem]{Proposition}
\newtheorem{remark}[theorem]{Remark}
\newtheorem*{theorem*}{Theorem}
\newtheorem*{definition*}{Definition}
\newtheorem*{lemma*}{Lemma}

\usepackage{chngcntr}
\counterwithin{table}{section}


\DeclareMathOperator{\triv}{triv}

\DeclareMathOperator{\Det}{det}
\DeclareMathOperator{\Dim}{dim}
\DeclareMathOperator{\Ker}{Ker}

\DeclareMathOperator{\im}{im}

\DeclareMathOperator{\sgn}{sgn}

\DeclareMathOperator{\Hom}{Hom}


\newcommand{\fg}{\mathfrak{g}}


\newcommand{\fk}{\mathfrak{k}}





\newcommand{\bbc}{\mathbb{C}}

\newcommand{\bbh}{\mathbb{H}}
\newcommand{\bbH}{\mathbb{H}}

\newcommand{\bbr}{\mathbb{R}}

\newcommand{\bbz}{\mathbb{Z}}


\newcommand{\Cc}{\mathcal{C}}
\newcommand{\Cd}{\D}

\newcommand{\Cp}{\mathcal{P}}

\newcommand{\Cw}{\mathcal{W}}








\newcommand{\Pin}{\mathsf{Pin}}

\newcommand{\D}{\mathcal{D}} 

\newcommand{\clif}{\mathcal{C}}
\newcommand{\C}{\omega} 
\newcommand{\rhoC}{\rho(\omega)}

\title{Two families of Dirac-like operators for Drinfeld's Hecke algebra}
\author{Kieran Calvert}

\begin{document}

\maketitle
\begin{abstract}
In this paper, we define two generalisations of Dirac operators for Drinfeld's Hecke algebra. One generalisation, Parthasarathy operators inherit the notion of the Dirac inequality. The second generalisation, warped Dirac operators are such that every unitary module must have a non-zero warped Dirac cohomology. An open question is whether non-zero warped Dirac cohomology can determine the infinitesimal character akin to the fact that non-zero Dirac cohomology does. For a type $A$ Hecke algebra we give a family of operators in each class.  
\end{abstract}
\section{Introduction}
Dirac operators for real Lie groups, \cite{AS77,K99,P72} have been significantly utilised in the study of $(\fg,K)$ representations. These ideas have been generalised to graded affine Hecke algebra \cite{BCT12} and Drinfeld's Hecke algebras \cite{C16,Ca20}. There are many other algebras which have an established theory of Dirac operators \cite{Ch17,CDM22, F16}.

The main results for Dirac operators are the Dirac inequality and Dirac cohomology. 
The Dirac inequality \cite{P72, Ci22} notes that the square of the Dirac operator on a unitary module is non-negative. The square is equal to a sum of two elements, both central in different algebras. In the case of Lie algebras, these algebras are $\fg$ and the diagonal embedding of $\fk$. In the case of graded affine Hecke algebras $\bbH$, the first element is central in $\bbH$ and the second central in the diagonal embedding of $\tilde{G}$. We define a generalisation of Dirac operators for Drinfeld's Hecke algebras, Parthasarathy operators (Definition \ref{d:poperator}). These operators are such that each Parthasarathy operator leads to an associated inequality (Corollary \ref{c:pineq}). For graded affine Hecke algebras associated to the symmetric group $\bbH(S_n)$ we define a family of Parthasarathy operators (Corollary \ref{c:popsym}). Unfortunately, every inequality given by this family is strictly weaker than the original Dirac inequality  \cite{BC14,Ci22} (Remark \ref{r:popsymneg}). 

The second major result for Dirac operators is Dirac cohomology and `Vogan's conjecture'. This states that if there exists non-zero Dirac cohomology \cite{Vo97} for an irreducible representation $(X, \pi_X)$ then the Dirac operator relates the infinitesimal character of $X$ with a character of a diagonal algebra. In the Lie algebra setting this diagonal algebra is $U(\fk)$ and for graded affine Hecke algebras the diagonal algebra is generated by $\tilde{G}$. Inspired by an infinite family of Dirac operators for the Dunkl angular momentum algebra \cite{CDM22} and the total Dunkl angular momentum algebra \cite{CDO22}, we define another generalisation of the Dirac operator, warped Dirac operators (Definition \ref{d:voperators}). We give an infinite family of warped Dirac operators for Drinfeld's Hecke algebras (Definition \ref{d:vopfam} and Theorem \ref{t:vadmiss}). We prove a simple condition for an irreducible unitary representation $(X,\pi_X)$ to have non-zero cohomology for at least one warped Dirac operator $\D_\C$ (Proposition \ref{p:nonzerocoh}). We show that this condition is satisfied for every irreducible unitary representation of $\bbH(S_n)$. The question arises if it is possible for the warped Dirac cohomology to inform the central character of the original module, akin to the results for the original Dirac operator.

It would be interesting to apply these ideas to Lie algebras \cite{HP06} and operators as defined by Flake \cite{F16}, Chan \cite{Ch17} and the local/global theory of Cherednik algebras introduced by Ciubotaru and De Martino \cite{CDM20}. Application of these ideas to Lie algebras could lead to a generalisation of Dirac cohomology \cite{HP06} and a larger series of representations generalising the Dirac series for real Lie groups \cite{BDW22}.

\section{Preliminaries}
\subsection{Drinfeld's Hecke algebra}

\label{drinfeldalgebras}
 In this section, we define Drinfeld algebras. Given a finite group $G$, anti-symmetric bilinear forms $b_g$ for $g \in G$ and a representation $(V,\pi_V)$ of G, then we construct an algebra
 $$ \bbH =\mathbb{C}G\rtimes T(V)/R.$$
 Here $R$ is the two-sided ideal of $\mathbb{C}G\rtimes T(V)$ generated by the relations,
 $$g^{-1}vg = \pi_V(g)(v) \text{ for all } g\in G \text{ and } v\in V,$$
 and
 $$[u,v] = \sum_{g \in G} b_g(u,v) g \text{ for all } v,u \in V.$$
 We define a filtration on the algebra $\mathbb{C}G\rtimes T(V)/R$, a vector $v$ has degree $1$ and a group element $g \in G$ has degree $0$. 
 \begin{definition}\cite{D86} \label{drinfeldalgdef} An algebra of the form $\bbH =\mathbb{C}G\rtimes T(V)/R$ is a Drinfeld algebra if it satisfies a PBW criterion. That is the associated graded algebra is naturally isomorphic to 
 $$\mathbb{C}G \rtimes S(V).$$

\end{definition}
\noindent  We state the conditions on the bilinear forms $b_g$ such that $\bbH$ is a Drinfeld algebra. 
 Define $G(b) = \{g\in G : b_g \neq 0\}$.

\begin{theorem}\cite{D86}\cite[Theorem 1.9]{RS03} The algebra $\bbH$ is a Drinfeld algebra if and only if for every $g,h \in G$ and $u,v \in V$, $h' \in Z_G(g)$, $g' \in G(b) \setminus \ker \pi_V$,

\begin{itemize} 

\item   $b_{g^{-1}hg} (u,v) = b_h (\pi_V(g)(u),\pi_V(g)(v))$

\item $\Ker b_{g'} = V^{\pi_V(g')}$ and $\Dim(V^{\pi_V(g')}) = \Dim V - 2,$

 \item $\Det(h'|_{V^{\pi_V(g')^\perp}}) = 1$, 
\end{itemize}
where $V^{\pi_V(g)^\perp }= \{ v - \pi_V(g)(v): v\in V\}$.
 
  \end{theorem}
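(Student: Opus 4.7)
The plan is to apply Bergman's diamond lemma (equivalently, the PBW criterion) to the presentation of $\bbH$. Fix an ordering on a basis of $V$ and regard the relations as rewriting rules: $vg \to g\pi_V(g)(v)$ (for a group element followed by a vector, using the crossed relation) and $vu \to uv - \sum_{g \in G} b_g(u,v) g$ when $v > u$. The associated graded algebra is $\bbc G \rtimes S(V)$ if and only if every overlap ambiguity reduces to a common expression. There are essentially two kinds of ambiguities: a three-vector ambiguity from $uvw$, and a mixed ambiguity from $wvg$ with $w > v$.

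For the mixed ambiguity I would compute $g \cdot (uv - vu)$ in two ways: on one hand rewriting $uv-vu$ first to get $-\sum_h b_h(u,v) h$ and then pushing $g$ across, on the other hand moving $g$ past the vectors first and then forming the commutator. Equating and matching coefficients of each $h$ will yield exactly the first bullet, the equivariance condition $b_{g^{-1}hg}(u,v) = b_h(\pi_V(g)u, \pi_V(g)v)$. For the three-vector ambiguity I would expand the ``Jacobi'' sum $[[u,v],w] + [[v,w],u] + [[w,u],v]$ inside $\bbH$; after moving the output group elements to the right using $gw = \pi_V(g^{-1})(w)g$, the PBW condition becomes
\begin{equation*}
\sum_{g \in G} \Bigl[b_g(u,v)(w - \pi_V(g)w) + b_g(v,w)(u-\pi_V(g)u) + b_g(w,u)(v - \pi_V(g)v)\Bigr]\, g = 0
\end{equation*}
for every $u,v,w \in V$. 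Since the $g \in G$ are linearly independent, each bracketed vector in $V$ must vanish separately. For $g \in \ker\pi_V$ this is automatic, so the constraint is only on $g \in G(b)\setminus\ker\pi_V$. By picking $u,v,w$ that map to linearly independent vectors in the image of $1-\pi_V(g)$ and choosing $b_g(u,v) \neq 0$, I can force $w - \pi_V(g)w$ to be a linear combination of $u - \pi_V(g)u$ and $v-\pi_V(g)v$; since $w$ is arbitrary, this shows $\dim V^{\pi_V(g)^\perp} \le 2$. It cannot be $\le 1$ either, for then $b_g$ (antisymmetric with two-dimensional cokernel) would vanish. This gives the dimension assertion, and then examining the equation for $w \in V^{\pi_V(g)}$ forces $b_g(u,v)(w-\pi_V(g)w) = 0$ with $w-\pi_V(g)w = 0$, whence rewriting the equation for general $w$ shows $\ker b_g = V^{\pi_V(g)}$, the second bullet.

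For the determinant condition, specialise the equivariance of the first bullet to $h = g \in G(b)\setminus\ker\pi_V$ and $g$ replaced by $h' \in Z_G(g)$: it says $b_g$ is invariant under $\pi_V(h')$. Since $h'$ commutes with $g$ it preserves the decomposition $V = V^{\pi_V(g)} \oplus V^{\pi_V(g)^\perp}$, and the restriction $b_g|_{V^{\pi_V(g)^\perp}}$ is a nondegenerate antisymmetric form on a two-dimensional space; an operator preserving such a form has determinant $1$. Conversely, assuming all three bullets, one retraces the computations above to confirm that the ``Jacobi'' sum vanishes (the three bullets are exactly designed to make this happen for each $g$) and that the mixed ambiguity resolves, so the diamond lemma applies and PBW holds. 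The main obstacle is the forward direction of the three-vector analysis: extracting from one identity both the dimension constraint and the identification $\ker b_g = V^{\pi_V(g)}$ requires a careful choice of test vectors, which is exactly the content of Ram–Shepler's argument.
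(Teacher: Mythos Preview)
The paper does not give its own proof of this theorem; it is quoted as a known result with citations to Drinfeld and to Ram--Shepler, so there is nothing in the paper to compare against. Your sketch is essentially the Ram--Shepler argument via the diamond lemma, and the overall architecture is correct: the mixed ambiguity yields the equivariance bullet, the triple-vector (Jacobi) ambiguity yields the codimension-two and kernel bullets, and specialising equivariance to centraliser elements gives the determinant bullet.

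A couple of points in your middle paragraph are out of order or garbled and would need tightening in an actual write-up. First, the sentence ``It cannot be $\le 1$ either, for then $b_g$ (antisymmetric with two-dimensional cokernel) would vanish'' presupposes that $V^{\pi_V(g)} \subset \ker b_g$, which you only establish afterwards; the clean order is: from the Jacobi relation with $w \in V^{\pi_V(g)}$ and $u,v$ chosen so that $(1-\pi_V(g))u,(1-\pi_V(g))v$ are independent, deduce $V^{\pi_V(g)} \subset \ker b_g$; then observe that a nonzero alternating form cannot have a kernel of codimension one, forcing $\dim V^{\pi_V(g)^\perp} = 2$; and finally conclude that $b_g$ descends to a nondegenerate form on the two-dimensional quotient, giving $\ker b_g = V^{\pi_V(g)}$. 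Second, your phrase ``examining the equation for $w \in V^{\pi_V(g)}$ forces $b_g(u,v)(w-\pi_V(g)w)=0$ with $w-\pi_V(g)w=0$'' is vacuous as written (that term is zero by hypothesis); the content comes from the remaining two terms, as above. With these reorderings the forward direction is complete, and your converse sketch is fine.
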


  \begin{example}
      Let $G$ be the symmetric group $S_n$, acting on the $n-1$ dimensional root space $V$. The root space $V$ has a $S_n$ invariant form $\langle \, , \, \rangle$. Consider roots $\Phi \subset V$ for $S_n$ and let $\textbf{c}$ be a parameter in $\bbc$. Define $b_g$ for $g \in S_n$
      \[
        b_g(u,v) = \begin{cases} 0 & \text{ if } g \text{ is not a three cycle}, \\
        \textbf{c}\left(\langle\alpha , u \rangle\langle \beta , v \rangle - \langle \alpha , v \rangle\langle \beta , u \rangle\right) & \text{ for } g = s_{\alpha}s_\beta \text{ and }  \alpha \beta \in \Phi^+ \text{ such that } \langle \alpha ,\beta \rangle_{V^*} \neq 0. \end{cases}
      \]
      This defines the graded affine Hecke algebra associated to $S_n$ with representation $\bbc^{n-1}$. 
  \end{example}

\subsection{Clifford Algebra}

We define some basics on Clifford algebras, for more details see \cite{M13}.
We assume that $V$ has a $G$-invariant non-degenerate bilinear form $\langle \, , \, \rangle_V: V \times V \to \bbc.$
We consider the Clifford algebra $ \clif \colonequals \clif (V,\langle \, , \, \rangle_V)$ with canonical map $\gamma \colon V\to \clif $. Let $\{u_i\} $ be a $\langle \, , \, \rangle_V$-orthonormal basis of $V$, then $ \clif $ is generated by $\{e_i= \gamma(u_i)\}$ satisfying 
	\begin{equation*}\label{e:clifrel}
\{e_j,e_k\} = e_je_k + e_ke_j = 2 \langle e_j,e_k\rangle_V = 2 \delta_{jk}\,.
		\end{equation*}
The Clifford algebra is naturally $\bbz_2$-graded with $\gamma(V)$ having degree $\bar 1$. We extend this $\bbz_2$ grading to $\bbH\otimes \Cc$ by giving every element in $\bbH$ degree $\bar{0}$.

\begin{definition}
     For a homogeneous element $a$ in $\bbH \otimes \Cc$ with $\bbz_2$-degree $|a| \in \bbz_2$, we define $\epsilon(a) = (-1)^{|a|}$.
\end{definition}

\noindent In the Clifford algebra, there is a realisation of the group $\Pin \colonequals \Pin(V,B)$, which is a double cover of the orthogonal group $p \colon \mathrm{Pin} \to \mathsf{O}$.
We define a double cover $\widetilde{ \pi_V G} = p^{-1}(\pi_V G)$. Note that $\widetilde{\pi_V(G)}$ is not a double cover of $G$ but it is a double cover of $\pi_V(G)$. We construct a cover of $G$.  We define $\widetilde{G}$ to be the semi direct product $ \Ker \pi_V \rtimes \widetilde{\pi_V(G)} $.
Given $\widetilde{G}$ we can embed the group in $\bbH \otimes \Cc$ via 
$$\rho: \widetilde{G} \to \bbH \otimes \Cc,$$
$$\rho(\tilde{g}, h) = h p(\tilde{g})\otimes \tilde{g}, \hspace{1cm} \tilde{g} \in \widetilde{\pi_V(G)}, h \in \Ker\pi_V.$$
For a reflection $s \in G$ let $\tilde s$ denote a preimage in $\tilde G$, so $p(\tilde s) =  \pi_V s$. Let $\theta$ be the nontrivial preimage of $1$ in $\widetilde{G}$. The element $\theta$ is central in $\widetilde{G}$ and has order two: $\theta^2 =1$. 
\begin{definition}\label{d:sgn}
    We define a character $\sgn: \bbc \widetilde{G} \to \bbc$, such that $\sgn(\tilde{g}) = \det_{\pi_V}(p(\tilde{g}))$.
\end{definition}

\begin{example} A Weyl group $W$ (with simple roots $\Delta$ and positive roots $\Phi^+$) has presentations
\[
W = \langle s_\alpha,\alpha\in \Phi^+ \mid s_\alpha^2=1,s_\alpha s_\beta s_\alpha = s_\gamma,\gamma = s_\alpha(\beta)\rangle,
\]
\[
W = \langle s_\alpha,\alpha\in \Delta \mid (s_\alpha s_\beta)^{m_{\alpha,\beta}} = 1 \rangle
\]
while the double-cover has presentations \textbf{}
\begin{equation*}\label{e:PinPresentation}
\widetilde{W} = \langle \theta, \tilde{s}_\alpha,\alpha\in \Phi^+ \mid \tilde{s}_\alpha^2=1=\theta^2,\tilde{s}_\alpha \tilde{s}_\beta \tilde{s}_\alpha = \theta\tilde{s}_\gamma,\gamma = s_\alpha(\beta), \theta \text{ central}\rangle,
\end{equation*}
\begin{equation*}\label{e:PinPresentationmalpha}
\widetilde{W} = \langle \theta, \tilde{s}_\alpha,\alpha\in \Delta \mid (\tilde{s}_\alpha \tilde{s}_\beta)^{m_{\alpha,\beta}} = (\theta)^{m_{\alpha,\beta}-1}, \theta \text{ central}\rangle.
\end{equation*}
\end{example}
\noindent  The group algebra $\bbc \widetilde{G}$ splits into two subalgebras 
\[
\bbc\widetilde{G} = \frac{1}{2}(1+\theta) \bbc \widetilde{G} \oplus \frac{1}{2}(1-\theta)\bbc\widetilde{G}.
\]
The algebra $\frac{1}{2}(1+\theta) \bbc \widetilde{G} $ is isomorphic to $\bbc G$. We shall denote the algebra $\frac{1}{2}(1\pm z)\bbc\widetilde{G}$ by, respectively $\bbc\widetilde{G}_\pm$. The algebra $ \bbc G_+$ is isomorphic to $\bbc G$ and is in the kernel of $\rho$. Furthermore, $\rho$ is a homomorphism of $\bbc\widetilde{G}_-$ to $H \otimes \Cc$.

\subsection{Hermitian forms}\label{Forms}

Let $\ast$ denote the anti-automorphism $\eta^* = \varepsilon(\eta^t)$, for all $\eta\in\mathcal{C}_\bbr$. Let also $\bullet$ be the anti-linear form on $\bbH$ by $v^\bullet = -v$ for $v \in V$, and $w^\bullet = w^{-1}$, for all $w \in G$. We then define an anti-linear anti-involution $\star$ on $\bbH\otimes\Cc$ defined by taking the tensor product of these two anti-involutions.
\begin{definition}\label{d:unitary}
    A Hermitian form $\langle \, ,\, \rangle_X: X \times X \to  \bbc$ is $\bbH$ invariant if
    \[\langle h x_1,x_2\rangle_X = \langle x_1, h^\bullet x_2 \rangle_X \text{ for all } x_1,x_2 \in X \text{ and } h  \in \bbH.\]
\end{definition}
\begin{definition}
A $\bbH$-module $X$ is unitary if there exists a positive definite $\bbH$-invariant Hermitian form on $X$.
\end{definition}

\noindent  Now fix, once and for all, $(\sigma,S)$ an irreducible module (spinor module) for $\Cc$. 
\begin{definition}
    There exists a positive definite form $\langle\, ,\, \rangle_S$ on $S$. such that $\langle \gamma(v) s_1,s_2 \rangle = \langle  s_1,v^*s_2 \rangle  $ for all $v \in V$ and $s_1,s_2 \in S$. This endows $S$ with a $\ast$-unitary $\Cc$-structure.
\end{definition}

\noindent   For any $\bullet$-Hermitian module $(\pi,X)$ of $\bbH$ we endow $X\otimes S$ with a $\star$-Hermitian structure $\langle x\otimes s,x'\otimes s'\rangle_{X\otimes S} = \langle x,x'\rangle_X\langle s,s'\rangle_S$ for all $x,x'\in X$ and $s,s' \in S$.  If $X$ is $\bullet$-unitary then the $\star$-Hermitian form on $X \otimes S$ is also positive definite and hence $\star$-unitary.

\subsection{The original Dirac element}\label{s:diraczero}

If $V$ has a $G$-invariant symmetric bilinear form then one can define a Dirac operator $\mathcal{D}$. In \cite{C16} (resp. \cite{Ca20}) Dirac cohomology is defined for faithful (resp. non-faithful Drinfeld algebras). In this section, we recall definitions and a formula for $\D^2$ from \cite{Ca20}.
Given any basis $\{v_i\}$ of $V$ and dual basis $\{v^i\}$ with respect to $\langle \, ,\, \rangle_V$ we define the Dirac element
$$\mathcal{D} = \sum_{i} v_i \otimes v^i \in \bbH \otimes \Cc.$$

\noindent For every $g\in G(b)$ set, $$\textbf{k}_g = \sum_{i,j} b_g(v_i,v^j)v^iv_j \in \Cc,$$
and $$\textbf{h} = \sum_i v_iv^i \in \bbH.$$

Define the set $G(b)  = \{g\in G : b_g \neq 0\}$, write $\widetilde{G}(b)$ for the cover of this subset. For every coset representative $g \in G(b)/\Ker(\pi_V)$ define
$$\tilde{g} = \alpha\beta \in \Cc, \hspace{1cm} c_{\tilde{g}} = \frac{b_g(\alpha,\beta)}{1 - \langle \alpha ,\beta\rangle^2}\in \mathbb{C}, \hspace{1cm} e_{g} = \frac{b_g(\alpha,\beta)\langle \alpha \beta \rangle}{1 - \langle \alpha ,\beta\rangle^2}\in \mathbb{C}.$$
Every $w\in \widetilde{g}(b)$ can be written as $h^{-1} g h$ where $g$ is a coset representative of $\widetilde{G}(b)/ \Ker \pi_V$ and $h \in \Ker \pi_V$. Lemma \cite[2.3]{Ca20} gives $g = s_\alpha s_\beta$ and $\tilde{g} = \alpha\beta \in \Cc$.
We define, for $w = h^{-1}gh \in \widetilde{G}$, define
$$\tilde{g}  = \alpha \beta \in \Cc, \hspace{1cm} c_{\tilde{g}} = c_{\tilde{g}}, \hspace{1cm} e_{w} = he_{g}h^{-1}.$$
Let us define the Casimir elements, $\Omega_\bbH$ in $\bbH$ and $\Omega_{\widetilde{G}}$ in $\widetilde{G}$.
$$\Omega_{\bbH} = \textbf{h} - \sum_{g \in G(b) / \Ker\pi_V} e_g g \in \bbH^G,$$
$$\Omega_{\widetilde{G}} =   \sum_{\substack{h\in \Ker\pi_V\\ g \in G(b) /\Ker \pi_V}}  h^{-1}\tilde{g}h c_{\tilde{g}}\in \mathbb{C}[\widetilde{G}]^{\widetilde{G}}.$$
 We give a formula for $\mathcal{D}^2$. This is equivalent to \cite[Theorem 2.7]{C16},. The only variation being that $\ker \pi_V$ replaces $1$.

\begin{theorem}\label{t:Dsquare} \cite[Theorem 2.4]{Ca20}\cite[c.f. Theorem 2.7]{C16} The square of the Dirac element can be expressed as a sum of the two Casimir elements plus even terms from the Clifford algebra:
$$\mathcal{D}^2 = -\Omega_\bbH \otimes 1 +\pi_V(\Omega_{\widetilde{G}}) +  \frac{1}{2} \otimes \sum_{w \in \ker \pi_V} \textbf{k}_w.$$ 
\end{theorem}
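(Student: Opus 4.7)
The plan is a direct expansion of $\D^2$ followed by a careful bookkeeping of the resulting terms. Starting from $\D=\sum_i v_i\otimes v^i$, one obtains
\begin{equation*}
\D^2=\sum_{i,j}v_iv_j\otimes v^iv^j,
\end{equation*}
and I would simultaneously decompose $v_iv_j$ in $\bbH$ into symmetric and antisymmetric parts via the Drinfeld relation $[v_i,v_j]=\sum_g b_g(v_i,v_j)g$, and $v^iv^j$ in $\Cc$ via $\{v^i,v^j\}=2\langle v^i,v^j\rangle$. Because $b_g$ is antisymmetric while $\langle\,,\rangle$ is symmetric in $i,j$, the two mixed cross terms vanish upon summation, so $\D^2$ splits cleanly into a $(\text{symm}\otimes\text{symm})$ piece and an $(\text{anti}\otimes\text{anti})$ piece.

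The symmetric-symmetric piece is a standard contraction: the identity $\sum_j\langle v^i,v^j\rangle v_j=v^i$ gives $\sum_{i,j}\langle v^i,v^j\rangle v_iv_j=\sum_i v_iv^i=\textbf{h}$, contributing (up to the convention's sign) $\textbf{h}\otimes 1$. The antisymmetric-antisymmetric piece equals
\begin{equation*}
\tfrac{1}{4}\sum_{g\in G(b)} g\otimes\sum_{i,j}b_g(v_i,v_j)[v^i,v^j],
\end{equation*}
and here I would split the sum over $G(b)$ according to whether $g$ lies in $\ker\pi_V$.

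For $g\in G(b)\setminus\ker\pi_V$, the PBW criterion forces $\ker b_g=V^{\pi_V(g)}$ to have codimension $2$ and $g$ to be of the form $s_\alpha s_\beta$, so $b_g$ is supported on $\Span\{\alpha,\beta\}$. I would invert the Gram matrix of $\{\alpha,\beta\}$ to obtain its dual basis $\{\alpha^*,\beta^*\}$, and then use $\alpha\beta+\beta\alpha=2\langle\alpha,\beta\rangle$ together with $\alpha^2=\beta^2=1$ in $\Cc$ to evaluate $[\alpha^*,\beta^*]=2(\alpha\beta-\langle\alpha,\beta\rangle)/(1-\langle\alpha,\beta\rangle^2)$. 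This reduces $\sum_{i,j}b_g(v_i,v_j)v^iv^j$ to exactly $2c_{\tilde g}\tilde g-2e_g$. Summing over coset representatives and then conjugating by $\ker\pi_V$ assembles the bivector contributions $\sum_{\tilde g}c_{\tilde g}(g\otimes\tilde g)$ into $\pi_V(\Omega_{\widetilde G})$ through the embedding $\rho$, while the residual scalar contributions $-\sum_g e_g\,g\otimes 1$ combine with $\textbf{h}\otimes 1$ from the symmetric piece to produce $-\Omega_\bbH\otimes 1$.

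For $w\in G(b)\cap\ker\pi_V$ the codimension-$2$ restriction on $b_w$ is absent, so the $2$-plane analysis is replaced by the tautological identity $\sum_{i,j}b_w(v_i,v_j)v^iv^j=\textbf{k}_w$ (the symmetric part of $[v^i,v^j]$ again dropping by antisymmetry of $b_w$), producing the final summand. The main obstacle will be the $2$-plane Clifford calculation: verifying that inversion of the Gram matrix of $\{\alpha,\beta\}$ combined with the Clifford relations yields exactly the normalisations $1/(1-\langle\alpha,\beta\rangle^2)$ that define $c_{\tilde g}$ and $e_g$. Everything else is a formal rearrangement of sums together with the definitions of $\rho$, $\Omega_\bbH$ and $\Omega_{\widetilde G}$.
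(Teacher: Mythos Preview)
The paper does not actually prove this theorem: it is stated with citations to \cite{Ca20} and \cite{C16} and no proof is given in the present paper. Your proposal is therefore not being compared against anything in this text, but rather against the arguments in those references.

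That said, your outline is exactly the standard computation carried out in \cite{C16} and extended in \cite{Ca20}: expand $\D^2$, split each tensor factor into symmetric and antisymmetric parts so that the cross terms cancel, identify the symmetric contraction with $\textbf{h}$, and then analyse $\sum_{i,j} b_g(v_i,v_j)\,v^iv^j$ case by case according to whether $g\in\ker\pi_V$. For $g\notin\ker\pi_V$ the codimension-two constraint forces the support onto $\Span\{\alpha,\beta\}$ with $\pi_V(g)=s_\alpha s_\beta$, and the Clifford computation on that $2$-plane produces precisely the constants $c_{\tilde g}$ and $e_g$; for $w\in\ker\pi_V$ one simply recovers $\textbf{k}_w$. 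Your identification of the Gram-matrix inversion as the delicate step is accurate, and the remaining bookkeeping (assembling the bivector terms into $\rho(\Omega_{\widetilde G})$ via summation over $\ker\pi_V$-cosets, and combining the scalar residues with $\textbf{h}$ into $-\Omega_\bbH$) is exactly how the cited proofs proceed. One small point to watch: in the $\ker\pi_V$ contribution the group element $w$ should remain in the first tensor factor, so the final term is really $\tfrac{1}{2}\sum_{w\in\ker\pi_V} w\otimes\textbf{k}_w$; the paper's displayed formula is slightly loose notationally here.
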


\begin{lemma}\cite[Lemma 2.4]{C16}
The operator $\D$ $\sgn$-commutes with $\widetilde{G}$,
\[
\rho(\tilde{g}) \D \rho(\tilde{g})^{-1} = \sgn (\tilde{g})\D
\]
for every $\tilde{g} \in \widetilde{G}$.
\end{lemma}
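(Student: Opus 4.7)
The plan is to verify the identity on a generating set of $\widetilde G$ and extend by multiplicativity. Both sides are multiplicative in $\tilde g$: conjugation is a group action, and $\sgn = \det_{\pi_V}\circ p$ is a character of $\widetilde G$. Since $\widetilde G = \Ker\pi_V\rtimes\widetilde{\pi_V(G)}$, it is enough to handle $\tilde g\in\Ker\pi_V$ and $\tilde g_0\in\widetilde{\pi_V(G)}$ separately.

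For $\tilde g\in\Ker\pi_V$ one has $\rho(\tilde g) = \tilde g\otimes 1$, so conjugation touches only the $\bbH$-factor of $\D$, and since $\pi_V(\tilde g)=1$ the relation forces $\tilde g v_i\tilde g^{-1} = v_i$ for every $i$. Hence $\rho(\tilde g)\D\rho(\tilde g)^{-1} = \D$, which matches $\sgn(\tilde g) = \det_{\pi_V}(1) = 1$.

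For $\tilde g_0\in\widetilde{\pi_V(G)}$, write $\rho(\tilde g_0) = g\otimes\tilde g_0$ with $g\in G$ a lift of $p(\tilde g_0) = A\in\mathsf{O}(V)$. Conjugation by $g$ on the $\bbH$-factor acts on $V\subset\bbH$ through the $\pi_V$-action, i.e.\ as the orthogonal transformation $A$ (modulo the usual semidirect-product sign). Conjugation by $\tilde g_0$ on the Clifford factor satisfies the Pin-covering formula
\[
\tilde g_0\,\gamma(u)\,\tilde g_0^{-1} = \sgn(\tilde g_0)\,\gamma\bigl(A(u)\bigr) \qquad (u\in V),
\]
where the sign is the Clifford $\bbz_2$-grading sign $(-1)^{|\tilde g_0|}$, agreeing with $\det A$ because $\Pin(V)$ is generated by unit-vector reflections, each odd and of determinant $-1$. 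Combining the two factors,
\[
\rho(\tilde g_0)\D\rho(\tilde g_0)^{-1} = \sgn(\tilde g_0)\sum_i A(v_i)\otimes A(v^i).
\]

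The final ingredient is the basis-independence of $\D$: for any orthogonal $A$, the family $\{A(v^i)\}$ is the $\langle\cdot,\cdot\rangle_V$-dual of $\{A(v_i)\}$ because $\langle A(v_i),A(v^j)\rangle_V = \langle v_i,v^j\rangle_V = \delta_i^j$; thus $\sum_i A(v_i)\otimes A(v^i) = \sum_i v_i\otimes v^i = \D$. This collapses the displayed expression to $\sgn(\tilde g_0)\D$, completing the proof. The one delicate bookkeeping point is the Pin conjugation sign, which for a unit vector $v$ is the Clifford computation $v\gamma(u)v^{-1} = 2\langle v,u\rangle_V v - u = -\gamma(s_v(u))$ arising from $\{e_j,e_k\}=2\delta_{jk}$, and which then propagates to all of $\widetilde{\pi_V(G)}\subseteq\Pin(V)$ by multiplicativity.
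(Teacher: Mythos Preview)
Your argument is correct and is exactly the standard proof of this fact. The paper does not supply its own proof of this lemma at all; it simply quotes it from \cite[Lemma 2.4]{C16}, so there is nothing to compare against beyond noting that your direct verification is the same computation carried out in that reference (and earlier in \cite{BCT12} for the graded affine Hecke algebra case).

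One small bookkeeping point worth tightening: with the paper's stated relation $g^{-1}vg=\pi_V(g)(v)$, conjugation by $g$ on the $\bbH$-factor gives $gv g^{-1}=A^{-1}(v)$, not $A(v)$. For the basis-independence step you need the \emph{same} orthogonal map on both tensor factors, so the Pin covering convention must be the one for which $\tilde g_0\,\gamma(u)\,\tilde g_0^{-1}=\sgn(\tilde g_0)\,\gamma(A^{-1}(u))$ as well. Your parenthetical ``modulo the usual semidirect-product sign'' covers this, but it would be cleaner to write the combined line as $\sgn(\tilde g_0)\sum_i B(v_i)\otimes B(v^i)$ for the common orthogonal $B$ determined by the conventions, and then invoke that $\{B(v^i)\}$ is dual to $\{B(v_i)\}$. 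The substance of your proof is unaffected.
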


\section{Parthasarathy Operators}

\begin{definition}\label{d:poperator}
    Let $\bbH$ be a Drinfeld algebra with group algebra $\bbc G$.  We say an operator $\Cp \in \bbH \otimes \Cc$ is a Parthasarathy operator if the following holds: 
    \begin{enumerate}
        \item $\Cp^* = \Cp$
        \item $\Cp^2 = z_1 + z_2$, where $z_1 \in Z(\bbH) \otimes 1$ and $z_2 \in Z(\rho(\bbc\widetilde{G})$. 
    \end{enumerate}
\end{definition}

\subsection{A family of Parthasarathy operators for {$\bbH$}}

Let $\bbH$ be a Drinfeld algebra, with a Dirac operator $\Cd \in \bbH \otimes \Cc$.

\begin{definition} \label{d:popfam}
For $\Xi \in \bbc\widetilde{G}_-$, we say that $\Xi = \sum \lambda_{\tilde{g}} \tilde{g}$ is $P$-admissible if 
\begin{enumerate}
    \item $\Xi^\bullet = \Xi$,
    \item $\sgn(\tilde{g}) =-1$ for all $\lambda_{\tilde{g}} \neq 0$,
    \item $\Xi^2 \in Z(\bbc\widetilde{G}_-)$.
\end{enumerate} 
\end{definition}
\begin{remark}
    The original Dirac operator (Section \ref{s:diraczero}) is a Parthasarathy Dirac operator if and only if $\kappa_g = 0$ for every $g \in \Ker \pi_V$.
\end{remark}
\noindent For the remainder of this section let us assume that $\kappa_g = 0$ for all $g \in \ker \pi_V$.
\begin{theorem} Suppose $\kappa_g =0$ for all $g \in \ker \pi_V$, for all $P$-admissible elements $\Xi \in \bbc\widetilde{G}_-$, the operator 
    \[\Cd_\Xi = \Cd + \rho\Xi,\] 
    is a Parthasarathy operator. 
\end{theorem}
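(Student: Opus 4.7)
The plan is to verify the two defining conditions of Definition \ref{d:poperator} for $\Cd_\Xi = \Cd + \rho\Xi$ directly, using Theorem \ref{t:Dsquare}, the $\sgn$-commuting relation for $\Cd$, and the three clauses of $P$-admissibility. All three clauses of Definition \ref{d:popfam} are used exactly once, which makes the shape of the argument transparent.

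For self-adjointness I would split $\Cd_\Xi^\star = \Cd^\star + (\rho\Xi)^\star$. For the original Dirac element a direct computation in the tensor factors gives $\Cd^\star = \sum_i v_i^\bullet \otimes (v^i)^\ast = \sum_i(-v_i)\otimes(-v^i) = \Cd$. For $\rho\Xi$ I note that $\rho(\tilde g)^\star = \rho(\tilde g^{-1}) = \rho(\tilde g^\bullet)$, since $p(\tilde g)^\bullet = p(\tilde g)^{-1}$ in $G$ and preimages in the Pin group satisfy $\tilde g^\ast = \tilde g^{-1}$. Extending anti-linearly and using condition 1 of Definition \ref{d:popfam} ($\Xi^\bullet = \Xi$) yields $(\rho\Xi)^\star = \rho\Xi$, so $\Cd_\Xi^\star = \Cd_\Xi$.

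For the square I expand $\Cd_\Xi^2 = \Cd^2 + \Cd\,\rho\Xi + \rho\Xi\,\Cd + (\rho\Xi)^2$. The crux of the proof is that the cross terms cancel: applying the lemma at the end of Section \ref{s:diraczero} to each summand of $\Xi$ gives $\rho(\tilde g)\Cd = \sgn(\tilde g)\,\Cd\,\rho(\tilde g)$, and condition 2 of $P$-admissibility forces $\sgn(\tilde g) = -1$ on every support element, so $\rho\Xi\cdot \Cd = -\Cd\cdot \rho\Xi$ and hence $\Cd\,\rho\Xi + \rho\Xi\,\Cd = 0$. This sign cancellation is the only non-routine step; the rest is bookkeeping.

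To read off the two central pieces, I would use Theorem \ref{t:Dsquare} together with the standing assumption $\textbf{k}_w = 0$ for $w \in \ker\pi_V$: this gives $\Cd^2 = -\Omega_\bbH\otimes 1 + \rho(\Omega_{\widetilde G})$, and the remark preceding the theorem (which says the original Dirac is itself a Parthasarathy operator under this hypothesis) ensures $\Omega_\bbH \in Z(\bbH)$ and $\rho(\Omega_{\widetilde G}) \in Z(\rho(\bbc\widetilde G))$. Finally, since $\rho$ is an algebra homomorphism that embeds $\bbc\widetilde G_-$, we have $(\rho\Xi)^2 = \rho(\Xi^2)$, and condition 3 of $P$-admissibility places $\Xi^2$ in $Z(\bbc\widetilde G_-)$, so $\rho(\Xi^2) \in \rho(Z(\bbc\widetilde G_-)) = Z(\rho(\bbc\widetilde G))$. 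Setting $z_1 = -\Omega_\bbH$ and $z_2 = \rho(\Omega_{\widetilde G}) + \rho(\Xi^2)$ produces the required decomposition $\Cd_\Xi^2 = z_1\otimes 1 + z_2$, completing the verification.
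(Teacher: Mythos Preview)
Your proposal is correct and follows essentially the same line as the paper's proof: expand $\Cd_\Xi^2$, kill the cross terms via the $\sgn$-commuting lemma together with condition~2 of $P$-admissibility, and then read off $z_1=-\Omega_\bbH$ and $z_2=\rho(\Omega_{\widetilde G}+\Xi^2)$ from Theorem~\ref{t:Dsquare} and condition~3. The paper handles self-adjointness with a one-word ``Clearly''; your explicit computation of $\Cd^\star=\Cd$ and $(\rho\Xi)^\star=\rho(\Xi^\bullet)=\rho\Xi$ via condition~1 is a welcome elaboration rather than a different route.
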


\begin{proof}
Clearly, $\Cd_\Xi^* = \Cd_\Xi$. Furthermore, 
$\Cd_\Xi^2 = (\Cd + \rho\Xi)^2 = \Cd^2 + \rho\Xi^2 + \Cd\rho\Xi +\rho\Xi\Cd$. Note that $\rho\Xi \in \rho\bbc\widetilde{G}$, hence $\rho\Xi \Cd = \sum \lambda_{\tilde{g}} \tilde{g} \Cd =  \sgn(\tilde{g}) \Cd \sum \lambda_{\tilde{g}} \tilde{g}  = -\Cd \rho \Xi$. We conclude that, 
\[\Cd_\Xi^2  = \Cd^2 + \rho\Xi^2 = z_1 + z_2 \]
where $z_1 \in Z(\bbh)$ and $z_2 = \Omega_{\widetilde{G}} + \Xi ^2 \in Z\rho(\widetilde{G})$. 

\end{proof}

\subsection{$P$-admissible elements}

Throughout this section, we only consider the symmetric group $S_n$. Recall $S_n$ and $\tilde{S_n}$ have presentations: 

\[
S_n = \langle s_{ij}, {1 \leq i<j \leq n} \mid (s_{ij} s_{kl})^{m_{ijkl}} = 1 \rangle\]
where \[ 
m_{ijkl} = \begin{cases} 2 & |\{i,j,k,l\}| = 4, \\ 
3 & |\{i,j,k,l\}| = 3, \\ 
1 & |\{i,j,k,l\}| = 2.\\  \end{cases}
\]
Similary for $\widetilde{S}_n$
\[
\tilde{S_n} = \langle \theta, \tilde{s}_{ij}, {1 \leq i<j \leq n} \mid (\tilde{s}_{ij} \tilde{s}_{kl})^{m_{ijkl}} = (\theta)^{m_{ijkl}-1}, \theta \text{ central}\rangle.
\]

 \begin{definition}\cite[3.1]{BK01} The Jucys-Murphy elements in $\bbc S_n ^-$ for $i=1,\ldots ,n$ are, $$M_j = \sum_{i=1}^{j-1}  \tilde{s}_{ij}.$$ 
 \end{definition}

\begin{remark}\cite[3.1]{BK01}\label{anti-commute} The Jucys-Murphy elements anti-commute , that is 
 $$M_iM_j= -M_jM_i \text{ if } i \neq j.$$ \end{remark}
 
 \begin{lemma}\label{l:evcent} \cite[3.2]{BK01} The even centre $Z(\tilde{S_n}_-)_0$ is spanned by the set of symmetric polynomials of the Jucys-Murphy elements. \end{lemma}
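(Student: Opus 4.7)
The plan is to proceed by proving the two containments separately, following the scheme of Brundan--Kleshchev \cite{BK01}. I would first establish that symmetric polynomials in $M_1,\ldots,M_n$ do land in the even centre, and then use a dimension count to see that they span all of it.

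For the forward containment, the starting observation is that although the $M_j$ anti-commute pairwise (Remark \ref{anti-commute}), their squares pairwise commute: from $M_iM_j = -M_jM_i$ one gets $M_i^2 M_j = M_i(-M_jM_i) = -(-M_jM_i)M_i = M_jM_i^2$. Hence $\bbc[M_1^2,\ldots,M_n^2]$ is a commutative subalgebra, and any symmetric polynomial in the $M_j$ collapses (via antisymmetry) into a polynomial in the $M_j^2$, which automatically lies in the even part of the grading. To prove centrality, I would show that for each generator $\tilde{s}_{k,k+1}$ the commutator with the elementary symmetric functions $p_r(M_1^2,\ldots,M_n^2)$ vanishes; the key local identity is $\tilde{s}_{k,k+1}M_k + M_k\tilde{s}_{k,k+1} = \tilde{s}_{k,k+1}M_{k+1} + M_{k+1}\tilde{s}_{k,k+1}$ (with appropriate signs coming from $\theta$), which together with the anti-commutation of the other $M_j$'s with $\tilde{s}_{k,k+1}$ forces the symmetric combination to be fixed. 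Since the Coxeter-like generators $\tilde{s}_{k,k+1}$ together with $\theta$ generate $\widetilde{S}_n$, this gives centrality in $\bbc\widetilde{S}_n_-$.

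For the reverse containment I would use semisimplicity of $\bbc\widetilde{S}_n_-$ and compare dimensions. The dimension of $Z(\bbc\widetilde{S}_n_-)_0$ equals the number of even irreducible spin representations up to associate pairs, which by the Schur--Morris classification is the number of partitions of $n$ into distinct parts; this matches the transcendence count for $\bbc[M_1^2,\ldots,M_n^2]^{S_n}$ after one checks which symmetric functions survive the relations imposed inside $\bbc\widetilde{S}_n_-$. Equivalently, one appeals directly to the Young seminormal form for $\widetilde{S}_n$ (the Gelfand--Tsetlin basis adapted to the chain $\widetilde{S}_1 \subset \widetilde{S}_2 \subset \cdots$), under which the $M_j^2$ act by the content-type scalars that separate the basis vectors; then any central element is a scalar on each irreducible block, so it is a polynomial in the $M_j^2$, and invariance under $S_n$-permutations of eigenvalues on a single block forces symmetry.

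The main obstacle is the centrality verification in paragraph two: carrying out the commutator $[\tilde{s}_{k,k+1}, p_r(M_1^2,\ldots,M_n^2)]$ honestly requires tracking the sign twists coming from $\theta$ and the anti-commutation, and the clean telescoping one hopes for only appears after symmetrisation. Everything else is then either formal (the commutativity of the $M_j^2$) or can be imported wholesale from \cite{BK01} (the dimension count and the seminormal form).
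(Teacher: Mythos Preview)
The paper does not give its own proof of this lemma: it is stated with a citation to \cite[3.2]{BK01} and used as a black box. So there is nothing in the paper to compare your argument against; your sketch is essentially a reconstruction of the Brundan--Kleshchev proof rather than an alternative to anything the paper does.

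That said, a couple of remarks on your sketch. Your claim that ``any symmetric polynomial in the $M_j$ collapses (via antisymmetry) into a polynomial in the $M_j^2$'' is not correct as stated: $M_1+\cdots+M_n$ is symmetric but odd, and does not lie in $\bbc[M_1^2,\ldots,M_n^2]$. What is true (and what you actually need) is that the \emph{even} symmetric polynomials in anti-commuting variables are generated by the power sums $p_{2r}$, which are visibly polynomials in the squares. Your centrality argument via the local relation between $\tilde{s}_{k,k+1}$ and $M_k,M_{k+1}$ is the right mechanism; in the Sergeev/Nazarov/Brundan--Kleshchev setup the precise identity is $\tilde{s}_{k,k+1}M_k\tilde{s}_{k,k+1}^{-1} = \theta M_{k+1} - \tilde{s}_{k,k+1}$ (up to normalisation), and one checks directly that this conjugation permutes $M_k^2$ and $M_{k+1}^2$ modulo terms that cancel under symmetrisation. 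For the reverse containment your two proposed arguments are not independent: the dimension count you allude to is exactly what the seminormal-form argument establishes, and the cleaner route is the latter --- the $M_j^2$ act with distinct eigenvalue tuples on a Gelfand--Tsetlin basis, so they separate the Wedderburn blocks, forcing any even central element to be a symmetric polynomial in them.
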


\begin{proposition}
    Every odd symmetric power polynomial in the squares of the Jucys-Murphy elements is a square of an odd symmetric polynomial in the Jucys-Murphy elements. 
\end{proposition}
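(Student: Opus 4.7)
The plan is to exhibit the candidate square directly. Since the $M_i$ anti-commute pairwise (Remark \ref{anti-commute}), the natural guess is that the odd symmetric power sum $\sum_i M_i^{2k+1}$ squares to the odd power sum $\sum_i M_i^{4k+2} = p_{2k+1}(M_1^2,\ldots,M_n^2)$, with all cross terms cancelling.

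First I would fix notation: let $p_r(x_1,\ldots,x_n) = \sum_{i=1}^n x_i^r$ denote the $r$th power sum, so that an arbitrary odd symmetric power-polynomial in the squared Jucys-Murphy elements has the form $p_{2k+1}(M_1^2,\ldots,M_n^2) = \sum_i M_i^{4k+2}$. Define
\[
\Xi_k \;=\; \sum_{i=1}^n M_i^{2k+1} \;\in\; \bbc\widetilde{S}_{n-}.
\]
This is an odd symmetric polynomial in the $M_i$ in the sense of Lemma \ref{l:evcent}.

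Next I would upgrade the pairwise anti-commutation $M_iM_j=-M_jM_i$ (for $i\neq j$) to the statement
\[
M_i^{a} M_j^{b} \;=\; (-1)^{ab}\, M_j^{b} M_i^{a} \qquad (i \neq j),
\]
by a straightforward induction on $a+b$. In particular, taking $a=b=2k+1$ gives $M_i^{2k+1}M_j^{2k+1} = -M_j^{2k+1}M_i^{2k+1}$ for $i\neq j$, since $(2k+1)^2$ is odd.

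Expanding $\Xi_k^2$ I would then write
\[
\Xi_k^2 \;=\; \sum_{i=1}^n M_i^{4k+2} \;+\; \sum_{i<j}\bigl(M_i^{2k+1}M_j^{2k+1} + M_j^{2k+1}M_i^{2k+1}\bigr).
\]
By the parity observation above each bracket on the right vanishes, giving $\Xi_k^2 = \sum_i M_i^{4k+2} = p_{2k+1}(M_1^2,\ldots,M_n^2)$, as required. There is no real obstacle here beyond bookkeeping the signs in the anti-commutation, and the argument shows more precisely that the explicit square-root can be chosen to be the power sum $\Xi_k$ itself.
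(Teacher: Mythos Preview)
Your argument is correct and is essentially the same as the paper's: both take the odd power sum $Q_k=\sum_i M_i^{k}$ (with $k$ odd; your $\Xi_k$ is the paper's $Q_{2k+1}$), expand $Q_k^2$, and use the anti-commutation of the $M_i$ to kill the cross terms, yielding $\sum_i M_i^{2k}=P_k$. The only difference is cosmetic---you parametrise the odd exponent as $2k+1$ and spell out the induction giving $M_i^{a}M_j^{b}=(-1)^{ab}M_j^{b}M_i^{a}$, which the paper leaves implicit.
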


\begin{proof}
Suppose that $k $ is odd.
Let $P_k = \sum_{i=1}^n (M_i^2)^k$ and let $Q_k = \sum_{i=1}^n (M_i)^k$. We claim that $P_k = Q_k^2$. 

\[ Q_k^2 =  (\sum_{i=1}^n (M_i)^k)^2 =  \sum_{i=1}^n (M_i)^k  \sum_{j=1}^n (M_j)^k =  \sum_{i=1}^n (M_i)^{2k} + \sum_{i\neq j} (M_i)^k(M_j)^k+ (M_j)^k(M_i)^k\]

\noindent  Now since $k$ is odd and if $i \neq j$ then $ (M_i)^k(M_j)^k = - (M_j)^k(M_i)^k$. Hence we have shown that $Q_k^2 = P_k$.

\end{proof}

\begin{theorem}
    Let $G= S_n$, and let $Q_j$ be an odd power polynomial in the Jucys-Murphy elements $Q_j$.
    Then $\sqrt{-1}Q_j$ is $P$-admissible.
\end{theorem}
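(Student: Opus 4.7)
The plan is to check the three conditions of Definition~\ref{d:popfam} in turn for $\Xi = \sqrt{-1}\,Q_j$, drawing on the Proposition immediately above together with Lemma~\ref{l:evcent} and Remark~\ref{anti-commute}.

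I would dispatch condition (2) first. Each monomial appearing in $Q_j = \sum_{i=1}^n M_i^j$ is, modulo powers of the central element $\theta$ (which has trivial sign), a product of exactly $j$ transpositions $\tilde s_{ki}$ in $\widetilde{S_n}$. Since $\sgn$ is a character with $\sgn(\tilde s_{ki}) = -1$ and $\sgn(\theta) = 1$, every such term has sign $(-1)^j = -1$ because $j$ is odd.

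For condition (1) the key observation is that the $\bullet$-structure on $\bbc\widetilde{G}_-$ (inherited from $\star$ on $\bbH\otimes\Cc$ via $\rho$) sends a reflection to its negative: the Clifford component of $\rho(\tilde s_\alpha)$ is odd, so $\rho(\tilde s_\alpha)^\star = s_\alpha^{-1} \otimes \tilde s_\alpha^\ast = -\rho(\tilde s_\alpha)$, giving $\tilde s_\alpha^\bullet = -\tilde s_\alpha$. Propagating this through $M_i^\bullet = -M_i$ and then $(M_i^j)^\bullet = (M_i^\bullet)^j = -M_i^j$ for $j$ odd yields $Q_j^\bullet = -Q_j$. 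Conjugate-linearity of $\bullet$ then gives $(\sqrt{-1}\,Q_j)^\bullet = -\sqrt{-1}\cdot(-Q_j) = \sqrt{-1}\,Q_j$; this is where the imaginary factor earns its keep.

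For condition (3) I would invoke the preceding Proposition: for $j$ odd the anti-commutativity of Remark~\ref{anti-commute} forces the cross terms in $Q_j^2$ to cancel in pairs, leaving $Q_j^2 = \sum_i M_i^{2j}$, a symmetric polynomial in the pairwise commuting squares $M_i^2$. Lemma~\ref{l:evcent} places this element in the even centre of $\bbc\widetilde{S_n}_-$, so $(\sqrt{-1}\,Q_j)^2 = -Q_j^2 \in Z(\bbc\widetilde{S_n}_-)$, as required.

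The only genuinely subtle point is the sign bookkeeping in condition (1), which also explains why $\sqrt{-1}\,Q_j$ rather than $Q_j$ itself is the correct candidate: without the $\sqrt{-1}$ factor, the element would be anti-Hermitian with respect to $\bullet$ and would fail the first $P$-admissibility axiom. The remaining two conditions are essentially direct consequences of facts already assembled above.
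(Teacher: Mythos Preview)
Your proof is correct and follows essentially the same line as the paper's: both verify the three $P$-admissibility conditions by (i) noting each term of $Q_j$ is an odd word in reflections so has $\sgn = -1$, (ii) using $\tilde s_\alpha^\bullet = -\tilde s_\alpha$ to deduce $Q_j$ is skew-adjoint and hence $\sqrt{-1}Q_j$ self-adjoint, and (iii) invoking the preceding Proposition together with Lemma~\ref{l:evcent} for centrality of $Q_j^2$. Your write-up is simply more explicit about the bookkeeping (the role of $\theta$, the pullback of $\star$ to $\bullet$ on $\bbc\widetilde{G}_-$), but the argument is the same.
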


\begin{proof} 

The element, $Q_j$, consists of sums of odd polynomials in $\tilde{s}_{kl}$, all of which have odd $sgn$ group elements. Each pseudo reflection $\tilde{s}_{kl}$ is such that $\rho\tilde{s}_{kl}^\bullet = -\rho\tilde{s}_{kl}$, hence $Q_j$ is skew-adjoint and $\sqrt{-1}Q_j$ is self-adjoint. To complete the proof we note that $Q_j^2$ is central in $\bbc\widetilde{G}_-$.
\end{proof}

\begin{corollary}\label{c:popsym}
    Let $\bbH(S_n)$ be a graded affine Hecke algebra with the symmetric group. For every odd $j$, then the operator 
    \[ \D +\sqrt{-1} Q_j\]
    is a Parthasarathy operator.
\end{corollary}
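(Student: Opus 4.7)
The plan is to observe that this corollary is essentially an immediate synthesis of the two preceding results in this subsection. The preceding theorem asserts that whenever $\Xi \in \bbc\widetilde{G}_-$ is $P$-admissible, the operator $\D_\Xi = \D + \rho\Xi$ is a Parthasarathy operator, and the just-proved theorem establishes that $\sqrt{-1}\,Q_j$ is $P$-admissible for odd $j$ when $G = S_n$. Combining these and identifying $\rho(Q_j)$ with $Q_j$ under the standard embedding into $\bbH \otimes \Cc$ yields the claimed statement.

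Before invoking the family theorem, I would first verify that the standing hypothesis of the subsection, namely $\textbf{k}_g = 0$ for every $g \in \Ker \pi_V$, actually holds for the graded affine Hecke algebra $\bbH(S_n)$ acting on its reflection representation $V = \bbc^{n-1}$. Since the reflection representation of $S_n$ on $\bbc^{n-1}$ is faithful, $\Ker \pi_V = \{1\}$, and the bilinear form $b_1$ vanishes (as $1$ is not a three-cycle). Hence $\textbf{k}_1 = 0$ and the hypothesis is satisfied vacuously, so Theorem \ref{t:Dsquare} gives
\[
\D^2 = -\Omega_\bbH \otimes 1 + \pi_V(\Omega_{\widetilde{G}}),
\]
without leftover Clifford terms, placing us in the regime where the family construction applies.

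With both inputs in place, I would apply the family theorem to $\Xi = \sqrt{-1}\,Q_j$ to conclude that $\D + \rho\Xi$ is Parthasarathy; explicitly, $(\D + \sqrt{-1}\,Q_j)^* = \D + \sqrt{-1}\,Q_j$ because $\D^* = \D$ and $\sqrt{-1}\,Q_j$ was shown to be self-adjoint under $\star$, and
\[
(\D + \sqrt{-1}\,Q_j)^2 = \D^2 - Q_j^2 \in Z(\bbH)\otimes 1 \;+\; Z(\rho(\bbc\widetilde{G})),
\]
since the cross terms cancel by the $\sgn$-anticommutation of $\rho\Xi$ with $\D$, and $Q_j^2 \in Z(\bbc\widetilde{G}_-)$ by $P$-admissibility.

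Since every step is a direct citation of a previously established result, there is no substantive obstacle; the only thing that requires a moment of attention is the verification that the subsection's blanket hypothesis on $\textbf{k}_g$ is satisfied for $\bbH(S_n)$ on its reflection representation, which is immediate from faithfulness.
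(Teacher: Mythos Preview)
Your proposal is correct and matches the paper's approach: the corollary is stated without proof, as it follows immediately by combining the theorem that $P$-admissible $\Xi$ yield Parthasarathy operators $\D_\Xi$ with the preceding theorem that $\sqrt{-1}\,Q_j$ is $P$-admissible. Your additional verification that the standing hypothesis $\textbf{k}_g = 0$ for $g \in \Ker\pi_V$ holds for $\bbH(S_n)$ (via faithfulness of the reflection representation) is a welcome point of rigor that the paper leaves implicit.
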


\subsection{A family of Dirac inequalities}

 Suppose that $X$ is unitary (Definition \ref{d:unitary}), then any Parthasarathy operator $\Cp$ acting on $X \otimes S$ is self-adjoint. Furthermore, $X \otimes S$ has a $\bbH \otimes \Cc$-invariant positive definite form. Hence, $\pi_{X \otimes S}\Cp^2$ is a positive operator. 

\begin{corollary}[Generalised Parthasarathy inequality] \label{c:pineq}

For every Parthasarathy operator $\Cp$ and unitary module $(X,\pi_X)$, then $\Cp^2$ is positive operator on $X \otimes S$ and 
\[ \pi_X(z_1) + \pi_X(z_2) \geq 0.\]

\noindent  Here, using Definition \ref{d:poperator}, $\Cp^2 = z_1 +z_2$, with $z_1 \in Z(\bbh)$, $z_2 \in Z(\rho(\bbc\widetilde{G}))$.
    
\end{corollary}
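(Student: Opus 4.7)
The plan is to reduce the corollary to the elementary fact that the square of a self-adjoint operator on a Hilbert space is positive semidefinite. Assume $X$ is unitary in the sense of Definition \ref{d:unitary}, so that $X$ carries a positive definite $\bullet$-invariant Hermitian form. By the construction recalled at the end of Section \ref{Forms}, the tensor product $X\otimes S$ then inherits a positive definite $\star$-invariant Hermitian form, namely $\langle x\otimes s,x'\otimes s'\rangle_{X\otimes S}=\langle x,x'\rangle_X\langle s,s'\rangle_S$. Thus $(X\otimes S,\langle\,,\,\rangle_{X\otimes S})$ is a genuine finite-dimensional Hilbert space on which $\bbH\otimes\Cc$ acts via $\pi_X\otimes\sigma$, with the anti-involution $\star$ computing Hermitian adjoints.

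The first step is then to unpack property (1) of Definition \ref{d:poperator}: self-adjointness of $\Cp$ with respect to $\star$ translates directly into self-adjointness of the operator $(\pi_X\otimes\sigma)(\Cp)$ on $X\otimes S$. Consequently $(\pi_X\otimes\sigma)(\Cp)^2 = (\pi_X\otimes\sigma)(\Cp)^\ast (\pi_X\otimes\sigma)(\Cp)$ is a positive semidefinite operator, which is the statement ``$\Cp^2$ is a positive operator on $X\otimes S$'' in the corollary.

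The second step is to rewrite this positivity in terms of $z_1$ and $z_2$ using property (2) of Definition \ref{d:poperator}. Since $z_1\in Z(\bbH)\otimes 1$ it acts on $X\otimes S$ as $\pi_X(z_1)\otimes\Id_S$, while $z_2\in Z(\rho(\bbc\widetilde{G}))$ acts as $(\pi_X\otimes\sigma)(z_2)$; in both cases the action commutes with all of $\bbH\otimes\Cc$ (on the relevant summand) because $z_1,z_2$ are central in their respective algebras. Therefore $(\pi_X\otimes\sigma)(\Cp^2)=(\pi_X\otimes\sigma)(z_1)+(\pi_X\otimes\sigma)(z_2)\geq 0$ on $X\otimes S$, which is exactly the inequality $\pi_X(z_1)+\pi_X(z_2)\geq 0$ stated in the corollary, once one interprets $\pi_X$ in the obvious way on each piece.

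There is no real obstacle here; the only minor subtlety to be careful about is notational, namely confirming that the anti-involution denoted $*$ in Definition \ref{d:poperator} is the anti-involution $\star$ on $\bbH\otimes\Cc$ from Section \ref{Forms} (so that the self-adjointness hypothesis is the one that matches the positive-definite form on $X\otimes S$) and that $\pi_X$ in the conclusion abbreviates the appropriate action of each central piece on $X\otimes S$. With those conventions fixed, the argument is essentially one line: a self-adjoint operator on a Hilbert space has nonnegative square.
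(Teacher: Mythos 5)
Your proposal is correct and matches the paper's argument exactly: the paper's ``proof'' is the brief paragraph preceding the corollary, which notes that a Parthasarathy operator is self-adjoint on the positive-definite $\star$-Hermitian form on $X\otimes S$, hence has positive square, and the inequality follows from the decomposition $\Cp^2 = z_1 + z_2$. You have simply filled in the same steps in a little more detail.
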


\begin{remark}\label{r:popsymneg}
    Unfortunately, every Parthasarathy operator defined in Definition \ref{d:popfam} gives an inequality weaker than the original Dirac inequality. Let $\D_\Xi$ be defined as in Definition \ref{d:popfam}, then $\Xi$ is self-adjoint. Therefore, $\pi_X(\Xi)^2$ is a positive operator. The inequality associated to $\D_\Xi$ is
    \[ -\pi_X(\Omega_\bbH) + \pi_X(\Omega_{\widetilde{G}}) + \pi_X(\Xi)^2 \geq 0.
    \]
    Since $\pi_X(\Xi)^2$ is positive this is less restrictive than the original Dirac inequality,
      \[ -\pi_X(\Omega_\bbH) + \pi_X(\Omega_{\widetilde{G}}) \geq 0.
    \]
\end{remark}

\noindent  An interesting question, which the author intends to study, is whether there are any Parthasarathy operators in $\bbH \otimes \Cc$ which lead to new relations between the centre of $\bbH$ and the centre of $Z(\widetilde{G})$.

\section{Warped Dirac operators}

\begin{definition}\label{d:voperators}
    Let $\bbH$ be a Drinfeld algebra, we say an operator $\Cw \in \bbh \otimes \Cc$ is a warped Dirac operator if the following holds: 
    \begin{enumerate}
        \item $\Cw^* = \Cw$
        \item The operator $\Cw$ is $\sgn$-invariant under the action of $\rho\widetilde{G}$. 
    \end{enumerate}
\end{definition}

\begin{definition}

    Let $X$ be a $\bbH$-module and $S$ a spinor for $\Cc$, the operator $\Cw$ acts on $X \otimes S$ as $\Cw_{X\otimes S}$. Define $H(X,\Cw)$ as
    \[\Ker \Cw_{X \otimes S} / \Ker \Cw_{X \otimes S} \cap \im \Cw_{X \otimes S} .\]
    \end{definition}

\begin{proposition}
    The cohomology of $\Cw$ is a $\widetilde{G}$ module.     
\end{proposition}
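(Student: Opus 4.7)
The plan is to verify that the action of $\rho(\widetilde{G}) \subset \bbH\otimes\Cc$ by left multiplication on $X\otimes S$ preserves both $\Ker \Cv_{X\otimes S}$ and $\Ker \Cv_{X\otimes S} \cap \im \Cv_{X\otimes S}$, so that it descends to a well-defined $\widetilde{G}$-action on the quotient $H(X,\Cv)$. First I would rewrite the $\sgn$-invariance condition of Definition \ref{d:voperators}(2) as the intertwining identity $\rho(\tilde g)\,\Cv = \sgn(\tilde g)\,\Cv\,\rho(\tilde g)$ inside $\bbH\otimes\Cc$, where $\sgn(\tilde g) \in \{\pm 1\}$ is the scalar character of Definition \ref{d:sgn}.

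Evaluating this identity on any $\xi \in X\otimes S$ yields $\Cv_{X\otimes S}(\rho(\tilde g)\xi) = \sgn(\tilde g)\,\rho(\tilde g)\,\Cv_{X\otimes S}(\xi)$. If $\Cv_{X\otimes S}\xi = 0$ then the right-hand side vanishes and $\rho(\tilde g)\xi \in \Ker \Cv_{X\otimes S}$; if instead $\xi = \Cv_{X\otimes S}\eta$, then $\rho(\tilde g)\xi = \sgn(\tilde g)\,\Cv_{X\otimes S}(\rho(\tilde g)\eta) \in \im \Cv_{X\otimes S}$. Intersecting shows that $\rho(\tilde g)$ stabilises $\Ker \Cv_{X\otimes S} \cap \im \Cv_{X\otimes S}$ as well, and because $\rho$ is a group homomorphism these maps assemble into a $\widetilde G$-representation on each of the two subspaces, which then descends to the quotient $H(X,\Cv)$.

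I do not anticipate any real obstacle here: this proposition is essentially the reason condition (2) of Definition \ref{d:voperators} is imposed, and the argument reduces to the observation that a $\sgn$-commuting operator automatically induces a group action on its kernel, image, and the resulting subquotient. The only point worth flagging is that $\sgn(\tilde g)$ is a central scalar of modulus one, so it neither destroys kernel membership nor image membership; were condition (2) replaced by commutation up to a more general automorphism, the passage to the quotient would require genuine additional work.
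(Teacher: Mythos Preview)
Your argument is correct and is exactly what the paper's one-line proof (``This follows from the fact that $\Cv$ is $\sgn$-$\widetilde{G}$ invariant'') is abbreviating: you have simply unpacked the intertwining relation $\rho(\tilde g)\Cv = \sgn(\tilde g)\Cv\rho(\tilde g)$ to check explicitly that kernel and image are $\rho(\widetilde G)$-stable. There is no difference in approach, only in level of detail.
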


\begin{proof}
    This follows from the fact that $\Cw$ is $\sgn-\widetilde{G}$ invariant.
\end{proof}

\subsection{A family of warped Dirac operators for $\bbH$}

\begin{definition}\label{d:vopfam}
We define the $\sgn$-centre of $\bbc\widetilde{G}_-$ to be: 

\[
Z_{\sgn}(\widetilde{G}_-) = \{ g \in \bbc\widetilde{G}_- : g h = \sgn(h) h g \quad \text { for all } h \in \bbc\widetilde{G}\}.
\]
Furthermore, we say an element is $\sgn$-central if it is contained in the $\sgn$-centre.
\end{definition}

\noindent  The ungraded centre of $Z^{ug}\bbc\widetilde{G}_-$ is equal to $\Hom_{\widetilde{G}}(\triv, \bbc\widetilde{G}_-)$ and the $\sgn$-centre of $\bbc\widetilde{G}_-$ is equal to $\Hom_{\widetilde{G}}(\sgn, \bbc\widetilde{G}_-)$.

\begin{definition}\label{d:vadmiss}
A homogeneous element $\C\in \bbc\widetilde{G}_-$ is called {\bf $wD$-admissible} if $\C$ is $\sgn$-central and $
\C^\bullet =  \C$. For any $wD$-admissible $\C\in Z_{\sgn}\widetilde{G}_-$, define
\begin{equation*}\label{e:CDirac}
\D_\C := \D + \rho \C \in \bbh \otimes \clif.
\end{equation*}

\end{definition}

\begin{theorem}
    The elements $\D_\C$ such that $\C$ is $wD$-admissible are all warped Dirac operators. Furthermore, they are precisely the modification of $\Cd$ by elements in $\rho\bbc\widetilde{G}$ which are warped Dirac operators. 
\end{theorem}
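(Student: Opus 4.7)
The plan is to verify the two defining conditions of a Vogan operator ($\star$-self-adjointness and $\sgn$-equivariance under $\rho\widetilde{G}$) for $\D_\C$ directly from the $V$-admissibility hypotheses, and then to read the same calculation backwards in order to recover $V$-admissibility from the Vogan conditions on an arbitrary modification $\D + \rho\xi$. Two ingredients are already available: $\D^\star = \D$, which is immediate from $v^\bullet = -v$ and $v^* = -v$ combined with $\D = \sum_i v_i \otimes v^i$; and the twisted equivariance $\rho(\tilde{g})\D\rho(\tilde{g})^{-1} = \sgn(\tilde{g})\D$ recalled in Section~\ref{s:diraczero}. Because $\rho$ annihilates $\bbc\widetilde{G}_+$ and embeds $\bbc\widetilde{G}_-$ faithfully, any element of $\rho\bbc\widetilde{G}$ has the form $\rho\xi$ for a unique $\xi \in \bbc\widetilde{G}_-$, so only this case must be analysed.

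\textbf{Forward direction.} For the $\sgn$-equivariance of $\rho\C$ I would rewrite $\sgn$-centrality as $\tilde{h}\,\C\,\tilde{h}^{-1} = \sgn(\tilde{h})\C$ and push it through the group homomorphism $\rho$, obtaining $\rho(\tilde{h})\,\rho\C\,\rho(\tilde{h})^{-1} = \sgn(\tilde{h})\rho\C$. For $\star$-self-adjointness I would expand $(\rho\C)^\star$ using $(a \otimes b)^\star = a^\bullet \otimes b^*$: a typical term $p(\tilde{g}) \otimes \tilde{g}$ becomes $p(\tilde{g})^{-1} \otimes \tilde{g}^*$, and since $\tilde{g} \in \Pin$ is a product of $k$ unit vectors the Clifford identity $\tilde{g}^* = \varepsilon(\tilde{g}^t) = (-1)^k \tilde{g}^{-1} = \sgn(\tilde{g})\tilde{g}^{-1}$ applies. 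Writing $\C = \sum_{\tilde{g}} \lambda_{\tilde{g}} \tilde{g}$ and relabelling $\tilde{h} = \tilde{g}^{-1}$ then yields
\begin{equation*}
(\rho\C)^\star = \sum_{\tilde{h}} \sgn(\tilde{h})\,\overline{\lambda_{\tilde{h}^{-1}}}\,\rho(\tilde{h}).
\end{equation*}

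\textbf{Key reduction and converse.} The hinge of the argument is that any $\sgn$-central element is automatically supported on $\{\tilde{g} : \sgn(\tilde{g}) = 1\}$: applying $\sgn$-centrality with $\tilde{h} = \tilde{g}$ and reading off the coefficient of $\tilde{g}$ gives $\lambda_{\tilde{g}} = \sgn(\tilde{g})\lambda_{\tilde{g}}$, forcing $\lambda_{\tilde{g}} = 0$ whenever $\sgn(\tilde{g}) = -1$. This support condition makes the $\sgn(\tilde{h})$ factor in the displayed formula disappear, leaving $(\rho\C)^\star = \rho(\C^\bullet)$, and injectivity of $\rho$ on $\bbc\widetilde{G}_-$ then upgrades $(\rho\C)^\star = \rho\C$ to the identity $\C^\bullet = \C$. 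Running the same chain in reverse handles the converse: $\sgn$-equivariance of $\D + \rho\xi$ together with that of $\D$ forces $\xi$ to be $\sgn$-central, the support restriction follows automatically, and $\star$-self-adjointness of $\D + \rho\xi$ then reduces to $\xi^\bullet = \xi$. The main obstacle I anticipate is bookkeeping the sign $\sgn(\tilde{g})$ produced by the Clifford anti-involution on $\Pin$-elements and its interplay with $\bullet$ on $\bbc\widetilde{G}$: without the support restriction imposed by $\sgn$-centrality, $(\rho\C)^\star = \rho\C$ would not be equivalent to $\C^\bullet = \C$, and this compatibility is precisely what ties the two $V$-admissibility conditions together.
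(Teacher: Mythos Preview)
Your argument is correct and in fact more complete than the paper's own treatment. The paper states this theorem without a proof environment; the only justification it offers is the subsequent lemma on $\sgn$-invariance of $\D_\C$, whose proof is the single line ``Both $\D$ and $\C$ are $\sgn$ invariant, hence so is their sum''. Neither the $\star$-self-adjointness of $\D_\C$ nor the converse (``precisely'') direction is argued anywhere in the paper.

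Your contribution is therefore the observation that $\sgn$-centrality forces $\C$ to be supported on elements with $\sgn(\tilde g)=1$, and that under this support condition the Clifford anti-involution identity $\tilde g^{*}=\sgn(\tilde g)\tilde g^{-1}$ collapses to give $(\rho\C)^\star=\rho(\C^\bullet)$. This is exactly the mechanism the paper appeals to later (Corollary~\ref{c:sgncent} and the remark following it that every $V$-admissible element is even), but you have made the logical dependence explicit and used it to close the loop on both the forward and converse directions. One small point worth adding for completeness: the homogeneity hypothesis in Definition~\ref{d:vadmiss} is automatic once $\sgn$-centrality is established, since even support is the same as $\bbz_2$-degree $\bar 0$; you use this implicitly in the converse but do not say so.
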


\subsection{A formula for $\D_\C^2$}

\begin{lemma} The square of $\D_\C$ is equal to a central element in $\bbH$ plus a central element in $\widetilde{G}$, a linear term in $\D_\C$ and a correction quadratic term in  $\Cc$;
\[
(\D_\C)^2 =  -\Omega_\bbH \otimes 1 +\rho(\Omega_{\widetilde{G}}+ \C^2) +   + (1+\sgn(\rhoC)) \rhoC \D +\frac{1}{2} \otimes \sum_{w \in \ker \pi_V} \textbf{k}_w.
\]
 \end{lemma}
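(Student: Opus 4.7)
The plan is a direct expansion of the square together with substitution of results already established. Writing $\D_\C^2 = (\D + \rho\C)^2 = \D^2 + \D\,\rho\C + \rho\C\,\D + (\rho\C)^2$, each piece can be identified. For $\D^2$ I simply quote Theorem \ref{t:Dsquare}, which contributes $-\Omega_\bbH \otimes 1 + \rho(\Omega_{\widetilde{G}}) + \frac{1}{2}\otimes\sum_{w\in\ker\pi_V}\textbf{k}_w$.

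For the cross term, the key input is the $\sgn$-commutation $\rho(\tilde{g})\,\D\,\rho(\tilde{g})^{-1} = \sgn(\tilde{g})\,\D$ from the lemma at the end of Section \ref{s:diraczero}. Since $\sgn(\tilde{g}) \in \{\pm 1\}$, this rearranges to $\D\,\rho(\tilde{g}) = \sgn(\tilde{g})\,\rho(\tilde{g})\,\D$. Because $\C$ is $V$-admissible and hence homogeneous for the $\sgn$-grading on $\bbc\widetilde{G}_-$, every $\tilde{g}$ occurring in $\C$ shares the common value $\sgn(\rho\C)$, so linearity gives $\D\,\rho\C = \sgn(\rho\C)\,\rho\C\,\D$. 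Consequently
\[
\D\,\rho\C + \rho\C\,\D = \bigl(1 + \sgn(\rho\C)\bigr)\,\rho\C\,\D,
\]
which is exactly the linear term in the claimed formula.

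For the remaining piece, $(\rho\C)^2 = \rho(\C^2)$, using that $\rho$ restricts to an algebra embedding of $\bbc\widetilde{G}_-$ into $\bbH\otimes \Cc$. Furthermore $\C^2$ is genuinely central in $\bbc\widetilde{G}$: from $\sgn$-centrality, for any $h \in \bbc\widetilde{G}$,
\[
\C^2 h = \C\cdot\sgn(h)\,h\,\C = \sgn(h)^2\,h\,\C^2 = h\,\C^2,
\]
which lets me absorb $\rho(\C^2)$ into $\rho(\Omega_{\widetilde{G}} + \C^2)$. Summing the three contributions yields the stated identity.

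There is no real obstacle: once Theorem \ref{t:Dsquare} and the $\sgn$-commutation lemma are in hand, the computation is mechanical. The one subtle point is that ``homogeneous'' in Definition \ref{d:vadmiss} must be read as homogeneous with respect to the $\sgn$-grading on $\bbc\widetilde{G}_-$, which is precisely what makes $\sgn(\rho\C)$ a well-defined scalar $\pm 1$ and legitimizes the factorisation in the cross-term step.
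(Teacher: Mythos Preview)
Your proposal is correct and follows essentially the same route as the paper: expand $(\D+\rho\C)^2$, invoke Theorem~\ref{t:Dsquare} for $\D^2$, and use the $\sgn$-commutation $\D\,\rho(\tilde g)=\sgn(\tilde g)\,\rho(\tilde g)\,\D$ to rewrite the cross term. You supply slightly more justification than the paper (the well-definedness of $\sgn(\rho\C)$ from homogeneity, and the centrality of $\C^2$), but the argument is the same.
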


\begin{proof}
The following calculation is simple algebra,
\begin{equation*}
\begin{aligned}
   (\D_\C)^2 &= (\D + \rhoC)^2 \\
   &= (\D)^2 + \rhoC^2 + \D \rhoC + \rhoC \D \\
            &= (\D)^2 + \rhoC^2 + \sgn(\rhoC)\rhoC\D  + \rhoC \D. \\            
\end{aligned}
\end{equation*}
Finishing the proof with the application of Theorem \ref{t:Dsquare}.
\end{proof}

\begin{lemma}[$\sgn$ invariance of $D_\C$]

 For every $\tilde{g} \in \widetilde{G}$, we have the invariance property:
\[
\rho(\tilde{g})\D_\C \rho(\tilde{g})^{-1} = \sgn (p(\tilde{g}))\D_\C.
\]
\end{lemma}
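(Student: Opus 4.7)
The plan is to reduce the claim to the two summands of $\D_\C = \D + \rho\C$, since conjugation by $\rho(\tilde g)$ is linear.

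First I would invoke the lemma stated at the end of Section \ref{s:diraczero}, which gives $\rho(\tilde g)\D\rho(\tilde g)^{-1} = \sgn(\tilde g)\D$ for every $\tilde g \in \widetilde G$. This handles the original Dirac summand with no extra work.

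Next I would show the analogous identity $\rho(\tilde g)\rho(\C)\rho(\tilde g)^{-1} = \sgn(\tilde g)\rho(\C)$. Because $\C$ is $V$-admissible it lies in the $\sgn$-centre $Z_{\sgn}(\widetilde G_-)$, so by Definition \ref{d:vopfam} we have $\C\, \tilde g = \sgn(\tilde g)\, \tilde g\, \C$ inside $\bbc\widetilde G_-$. Rearranging gives $\tilde g\, \C\, \tilde g^{-1} = \sgn(\tilde g)\, \C$ (using that $\sgn$ takes values in $\{\pm 1\}$, so $\sgn(\tilde g)^{-1} = \sgn(\tilde g)$). Applying the algebra map $\rho$, which intertwines multiplication in $\bbc\widetilde G$ with multiplication in $\bbH\otimes\clif$, this identity transfers to $\rho(\tilde g)\rho(\C)\rho(\tilde g)^{-1} = \sgn(\tilde g)\rho(\C)$.

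Summing the two contributions yields
\[
\rho(\tilde g)\D_\C\rho(\tilde g)^{-1} = \sgn(\tilde g)\D + \sgn(\tilde g)\rho(\C) = \sgn(\tilde g)\D_\C,
\]
and finally $\sgn(\tilde g) = \sgn(p(\tilde g))$ by Definition \ref{d:sgn}. The only minor subtlety is confirming that $\rho$ preserves the relevant products on $\bbc\widetilde G_-$, which is exactly the content of the statement that $\rho$ embeds $\bbc\widetilde G_-$ into $\bbH\otimes\clif$, so there is no real obstacle beyond bookkeeping of signs.
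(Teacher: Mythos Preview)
Your proof is correct and follows exactly the same approach as the paper: decompose $\D_\C$ into its two summands $\D$ and $\rho(\C)$, observe that each is $\sgn$-invariant under conjugation by $\rho(\tilde g)$, and conclude by linearity. The paper's own argument is the one-line remark that both $\D$ and $\C$ are $\sgn$-invariant, so your version simply unpacks the justification for each summand in more detail.
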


\begin{proof}
    Both $\D$ and $\C$ are $\sgn$ invariant, hence so it their sum $\D_\C = \D + \rho(\C)$
\end{proof}

\subsection{$wD$-Admissible elements}

\begin{definition}\cite[Definition 6.3]{CDO22}
Let us the define the $\theta$-centre of $\bbc\widetilde{G}$,
\[
Z^\theta(\bbc\widetilde{G}) = \{ a \in \bbc\widetilde{G} |  a \tilde{g}= \theta^{\sgn(p(\tilde{g}))}\tilde{g}a \text{ for all } \tilde{g} \in \widetilde{G}\}.
\]
\end{definition}

\begin{proposition}\cite[Proposition 6.4]{CDO22}
The $\theta$-centre of $\bbc\widetilde{G}$ is spanned by elements of the form
\[
C^\theta_{\tilde{g}} =\sum_{\tilde{h} \in \widetilde{G}}\theta^{|l(\tilde{h})|} \tilde{h}^{-1} \tilde{g} \tilde{h} =   \sum_{\tilde{h} \in \widetilde{G}_{\overline{0}}} \tilde{h}^{-1} \tilde{g} \tilde{h} + \theta  \sum_{\tilde{h} \in \widetilde{G}_{\overline{1}}} \tilde{h}^{-1} \tilde{g} \tilde{h}
\]
for any choice of $\tilde{g} \in \widetilde{G}$.
\end{proposition}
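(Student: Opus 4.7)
The plan is to establish the two parts of the claim: first, that each $C^\theta_{\tilde{g}}$ lies in $Z^\theta(\bbc\widetilde{G})$, and second, that these elements span the $\theta$-centre. This is a twisted analogue of the standard fact that conjugacy class sums span the centre of a group algebra, and the proof follows the same template with careful bookkeeping of the sign twist.

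For the first part, I would fix $\tilde{k} \in \widetilde{G}$, compute $C^\theta_{\tilde{g}}\tilde{k}$ directly from the definition, and reindex the summation by $\tilde{h} \mapsto \tilde{h}\tilde{k}^{-1}$. Using that $|l(\cdot)|$ is additive modulo $2$, so $|l(\tilde{h}\tilde{k}^{-1})| \equiv |l(\tilde{h})| + |l(\tilde{k})| \pmod 2$, one can extract a factor of $\theta^{|l(\tilde{k})|}$ and recognise the result as $\theta^{|l(\tilde{k})|}\tilde{k}\,C^\theta_{\tilde{g}}$. The second displayed equality in the statement is then immediate from splitting the sum by the parity of $\tilde{h}$.

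For the spanning claim, I would expand an arbitrary $a = \sum_{\tilde{g}} c_{\tilde{g}}\tilde{g}$ in $Z^\theta(\bbc\widetilde{G})$ and compare coefficients in the defining relation $a\tilde{k} = \theta^{|l(\tilde{k})|}\tilde{k}a$. Matching the coefficient of each basis element yields the constraint
\[
c_{\tilde{g}} \;=\; c_{\theta^{|l(\tilde{k})|}\tilde{k}^{-1}\tilde{g}\tilde{k}} \qquad \text{for all } \tilde{k},\tilde{g} \in \widetilde{G},
\]
where $\theta^{|l(\tilde{k})|}$ is interpreted as a group element of $\widetilde{G}$. Thus $c$ is constant on the orbits of the twisted right action $\tilde{g} \mapsto \theta^{|l(\tilde{h})|}\tilde{h}^{-1}\tilde{g}\tilde{h}$, which is easily checked to be a genuine action using additivity of $|l(\cdot)|$. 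A standard orbit--stabilizer computation then gives $C^\theta_{\tilde{g}} = |\Stab(\tilde{g})|\sum_{\tilde{x}\in O_{\tilde{g}}}\tilde{x}$, so every orbit sum is a scalar multiple of some $C^\theta_{\tilde{g}}$ and $a$ decomposes as a linear combination of the $C^\theta_{\tilde{g}}$ over orbit representatives.

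The main delicate point is the status of $\theta$ in all of this: since $\theta$ is a genuine group element of $\widetilde{G}$, the basis vectors $\tilde{g}$ and $\theta\tilde{g}$ are distinct, and they may or may not lie in the same twisted orbit. When they do coincide in one orbit, the coefficient constraint forces $c_{\tilde{g}} = c_{\theta\tilde{g}}$, but nothing cancels in the orbit sum because the two basis elements are distinct. Working systematically with orbits of the twisted action, rather than ordinary conjugacy classes of $\widetilde{G}$, absorbs all these cases uniformly; this is the only step where one has to be careful, and it is the reason the sum in the definition of $C^\theta_{\tilde{g}}$ runs over all of $\widetilde{G}$ rather than over a transversal.
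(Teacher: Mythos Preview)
Your proposal is correct and follows essentially the same approach as the paper. The paper's proof is much terser: it asserts without detail that each $C^\theta_{\tilde{g}}$ lies in the $\theta$-centre, and for spanning it uses an iterative subtraction (pick a nonzero coefficient of $\tilde{g}$ in $a$, subtract a scalar multiple of $C^\theta_{\tilde{g}}$ to kill it, repeat), which is just a repackaging of your orbit-decomposition argument; your reindexing computation and your coefficient-constraint/orbit analysis supply exactly the details the paper leaves implicit.
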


\begin{proof}
Given any $\tilde{g}$, then $C^\theta_{\tilde{g}}$ is in $Z^\theta \bbc\widetilde{G}$. Furthermore, any $a \in Z^\theta \bbc\widetilde{G}$ that has a non-zero coefficient of $\tilde{g}$, then there exists a non-zero scalar $t$ such that $a - tC^\theta_{\tilde{g}}$ is $\theta$-central with no coefficient of $\tilde{g}$. Continuing the process shows that $a$ is in the space spanned by $C^\theta_{\tilde{g}}$. 
\end{proof}

\begin{theorem}\cite[Theorem 6.5]{CDO22}

 The $\sgn$-centre of $\bbc\widetilde{G}_-$ is the projection of the $\theta$-centre of $\bbc\widetilde{G}$
\[
Z^{\sgn}(\bbc\widetilde{G}_-) = \frac{1-\theta}{2} Z^\theta \bbc\widetilde{G}.
\]
\end{theorem}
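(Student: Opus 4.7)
The plan is to prove the equality by verifying both inclusions for the projection $e^- = \frac{1-\theta}{2}\colon \bbc\widetilde{G} \twoheadrightarrow \bbc\widetilde{G}_-$. The crucial observation driving the argument is that on $\bbc\widetilde{G}_-$ the central element $\theta$ acts as the scalar $-1$, so the defining relation of $Z^\theta$ collapses, after projection, into the sign-commutation relation defining $Z^{\sgn}$. Before starting I would pin down the notational convention: the exponent in ``$\theta^{\sgn(p(\tilde g))}$'' must be read as a parity bit (consistent with $\theta^{|l(\tilde h)|}$ in the preceding proposition), equal to $0$ for even $\tilde g$ and $1$ for odd $\tilde g$, since $\theta$ has order two.

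For the forward inclusion $e^- Z^\theta\bbc\widetilde{G}\subseteq Z^{\sgn}(\bbc\widetilde{G}_-)$, I take $a\in Z^\theta\bbc\widetilde{G}$ and set $\bar a = e^- a$. Writing $\varepsilon(\tilde g)\in\{0,1\}$ for the parity of $\tilde g$ and using centrality of $\theta$ together with $a\tilde g = \theta^{\varepsilon(\tilde g)}\tilde g a$, the computation
\[
\bar a\tilde g \,=\, e^- a\tilde g \,=\, e^-\theta^{\varepsilon(\tilde g)}\tilde g a \,=\, \tilde g\,(e^-\theta^{\varepsilon(\tilde g)})\,a
\]
combined with the identity $e^-\theta = -e^-$ yields $\bar a\tilde g = \tilde g\bar a$ when $\tilde g$ is even and $\bar a\tilde g = -\tilde g\bar a$ when $\tilde g$ is odd; both cases are precisely the $\sgn$-central relation $\bar a\tilde g = \sgn(p(\tilde g))\tilde g\bar a$.

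For the reverse inclusion I would show that every $b\in Z^{\sgn}(\bbc\widetilde{G}_-)$ in fact already lies in $Z^\theta\bbc\widetilde{G}$ and is fixed by $e^-$, so that $b = e^- b \in e^- Z^\theta\bbc\widetilde{G}$. Since $b\in\bbc\widetilde{G}_-$ we have $\theta b = -b$, and hence $e^- b = b$. For even $\tilde g$, both $\sgn$- and $\theta$-centrality reduce to ordinary commutation $b\tilde g = \tilde g b$. For odd $\tilde g$, the $\sgn$-central relation $b\tilde g = -\tilde g b$ combines with $\theta\tilde g b = \tilde g\theta b = -\tilde g b$ to give $b\tilde g = \theta\tilde g b$, which is exactly the $\theta$-central relation.

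There is no real obstacle here: once the convention on $\theta^{\sgn(p(\tilde g))}$ is fixed, the proof reduces to the two-line identity $e^-\theta = -e^-$ and the two parity case-splits above, and no non-trivial use of the structure of $\widetilde{G}$ or of the spanning formula for $Z^\theta\bbc\widetilde{G}$ from the preceding proposition is required. The only mild care to take is purely notational, which I would flag at the start so that even/odd dichotomy matches the formula for $C^\theta_{\tilde g}$ used earlier.
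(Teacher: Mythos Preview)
Your proof is correct. The paper does not actually supply its own proof of this theorem; it is quoted verbatim from \cite[Theorem 6.5]{CDO22}, so there is nothing to compare against here. Your argument is exactly the natural one: once one reads $\theta^{\sgn(p(\tilde g))}$ as $\theta^{\varepsilon(\tilde g)}$ with $\varepsilon(\tilde g)\in\{0,1\}$ the parity (as you correctly infer from the formula $C^\theta_{\tilde g}=\sum_{\tilde h}\theta^{|l(\tilde h)|}\tilde h^{-1}\tilde g\tilde h$ in the preceding proposition), the two defining relations differ only by the substitution $\theta\mapsto -1$, which is precisely what the idempotent $e^-=\tfrac{1-\theta}{2}$ effects. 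Both inclusions then reduce to the identity $e^-\theta=-e^-$ together with $\theta b=-b$ for $b\in\bbc\widetilde{G}_-$, and your two case-splits on parity handle them cleanly. No structural input about $\widetilde{G}$ or about the spanning set $\{C^\theta_{\tilde g}\}$ is needed, as you note.
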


\noindent In particular, the $\sgn$-centre of $\bbc\widetilde{G}_-$ is spanned by elements of the form 
\[
C^{\sgn}_{\tilde{g}} = \sum_{\tilde{h} \in \widetilde{G}}(-1)^{|l(\tilde{h})|} \tilde{h}^{-1} \tilde{g} \tilde{h} \in \bbc\widetilde{G}_-.
\]
\noindent  Denote by $\widetilde{G}_{\bar{0}}$ the even subgroup of $\widetilde{G}$.

\begin{lemma}\cite[Lemma 6.7]{CDO22} \label{l:splitsplit}
Suppose that $\tilde{g} \in \widetilde{G}$ is even, define the $\tilde{g}$ conjugacy class, $C(\tilde{g}) = \{ \tilde{w }\in \widetilde{G}: \tilde{g} = \tilde{h}^{-1}\tilde{g}\tilde{h}, \tilde{h} \in \widetilde{G}\}$, then the element $Z^\epsilon_{\tilde{g}}$ is non zero if and only if the conjugacy class $C(\tilde{g})$ splits into two conjugacy classes in $\widetilde{G}_{\overline{0}}$.

\end{lemma}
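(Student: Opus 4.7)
The plan is to evaluate $Z^\epsilon_{\tilde g}$ directly as a signed combination of $\widetilde{G}_{\bar{0}}$-orbit sums inside $\bbc\widetilde{G}_{\bar{0}}$, and then compare the two orbit sums arising from the even and odd cosets of $\widetilde{G}_{\bar{0}}$. First I would split the sum according to parity of $\tilde h$:
\[
Z^\epsilon_{\tilde g} \;=\; \sum_{\tilde h\in\widetilde{G}_{\bar{0}}} \tilde h^{-1}\tilde g\tilde h \;-\; \sum_{\tilde h\in\widetilde{G}_{\bar{1}}} \tilde h^{-1}\tilde g\tilde h.
\]
Since $\tilde g$ is even, each conjugate again lies in $\widetilde{G}_{\bar{0}}\cap C(\tilde g)$. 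By orbit--stabiliser, the first sum equals $|Z_{\widetilde{G}_{\bar{0}}}(\tilde g)|$ times the formal sum over the orbit $O_1:=\widetilde{G}_{\bar{0}}\cdot\tilde g$. Fixing any odd $\tilde h_0$ and reindexing $\tilde h = \tilde h_0\tilde k$ with $\tilde k\in\widetilde{G}_{\bar{0}}$, the second sum becomes $|Z_{\widetilde{G}_{\bar{0}}}(\tilde g')|$ times the orbit sum of $O_2:=\widetilde{G}_{\bar{0}}\cdot\tilde g'$, where $\tilde g' = \tilde h_0^{-1}\tilde g\tilde h_0$. Because $\tilde g$ and $\tilde g'$ are $\widetilde{G}$-conjugate, their centralisers in $\widetilde{G}_{\bar{0}}$ are conjugate, so the two integer weights agree.

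Next I would invoke the index-two dichotomy for the $\widetilde{G}$-class: $C(\tilde g)$ is either a single $\widetilde{G}_{\bar{0}}$-orbit, or a disjoint union of exactly two. In the non-splitting case one has $O_1 = O_2 = C(\tilde g)$, the two weighted orbit sums coincide, and $Z^\epsilon_{\tilde g}=0$. In the splitting case $\tilde g'$ must lie in the orbit distinct from $O_1$: otherwise $\tilde h_0^{-1}\tilde g\tilde h_0 = \tilde k^{-1}\tilde g\tilde k$ for some even $\tilde k$, whence $\tilde h_0\tilde k^{-1}$ is an odd element of $Z_{\widetilde{G}}(\tilde g)$, which (by the criterion discussed below) forces the class not to split, a contradiction. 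Consequently $Z^\epsilon_{\tilde g}$ is a non-zero integer multiple of $\sum_{x\in O_1} x - \sum_{x\in O_2} x$, yielding both directions of the equivalence.

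The main obstacle is the classical criterion used above, namely that $C(\tilde g)$ splits in $\widetilde{G}_{\bar{0}}$ if and only if $Z_{\widetilde{G}}(\tilde g)\subseteq \widetilde{G}_{\bar{0}}$. I would verify this by comparing $|C(\tilde g)| = |\widetilde{G}|/|Z_{\widetilde{G}}(\tilde g)|$ with $|O_1| = |\widetilde{G}_{\bar{0}}|/|Z_{\widetilde{G}_{\bar{0}}}(\tilde g)|$ and using $[\widetilde{G}:\widetilde{G}_{\bar{0}}]=2$; the condition $O_1 = C(\tilde g)$ then reduces to $|Z_{\widetilde{G}}(\tilde g)| = 2|Z_{\widetilde{G}_{\bar{0}}}(\tilde g)|$, i.e.\ to the existence of an odd centralising element. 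Once this criterion is in hand, the rest is routine bookkeeping with coset representatives, and the two cases of the dichotomy close the proof.
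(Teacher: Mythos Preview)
The paper does not actually prove this lemma; it is quoted verbatim from \cite[Lemma 6.7]{CDO22} with no argument supplied. Your proof is correct and is the standard one: decompose the signed sum over $\widetilde{G}$ according to the parity of $\tilde h$, recognise each half as a constant multiple of a $\widetilde{G}_{\bar 0}$-orbit sum, and then use the index-two dichotomy (equivalently, the criterion $Z_{\widetilde{G}}(\tilde g)\subseteq\widetilde{G}_{\bar 0}$) to see that the two halves either coincide or are supported on disjoint orbits. There is nothing to compare against in this paper, and your argument is exactly what one would expect the cited proof to contain.
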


 \begin{theorem}\cite{Sc11} \cite[Theorem 2.7]{St89}
 Let $\lambda$ be an even partition of $n$. The $\tilde{S}_n$ conjugacy classes $C_\lambda$ (or $C_\lambda^{\pm})$ if already split) split into two $\tilde{A}_n$ conjugacy classes if and only if $\lambda \in DP_n^+$. Here $DP_n^+$ is the set of distinct partitions of $n$ which are even. 
 
 \end{theorem}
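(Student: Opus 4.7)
The plan is to apply the standard index-two splitting criterion: since $[\tilde S_n:\tilde A_n]=2$, a conjugacy class $C\subseteq\tilde A_n$ of $\tilde S_n$ splits into two $\tilde A_n$-classes of equal size if and only if the centralizer $Z_{\tilde S_n}(\tilde\sigma)$ is contained in $\tilde A_n$ for any representative $\tilde\sigma\in C$; otherwise $C$ remains a single $\tilde A_n$-class. This reduces the theorem to a parity analysis of $Z_{\tilde S_n}(\tilde\sigma)$ in terms of the cycle type $\lambda=p(\tilde\sigma)$, a question that is indifferent to whether $C_\lambda$ had already split into $C_\lambda^\pm$ in $\tilde S_n$: in either case the splitting in $\tilde A_n$ is governed by the same centralizer.

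To carry this out I would identify $Z_{\tilde S_n}(\tilde\sigma)$ inside the preimage $p^{-1}(Z_{S_n}(\sigma))$. Since $Z_{S_n}(\sigma)\cong\prod_i(\bbz/\lambda_i\bbz)\wr S_{m_i}$, where $m_i$ is the multiplicity of $\lambda_i$, its generators are the cycle-shifts $c_i$ and, when some $m_i>1$, cycle-swap permutations. Using the Schur-type relations from the presentation of $\widetilde W$ recorded in the excerpt (so that disjoint transpositions anti-commute up to $\theta$), one checks for each generator whether its lift $\tilde\tau$ satisfies $\tilde\tau\tilde\sigma\tilde\tau^{-1}=\tilde\sigma$ or $\theta\tilde\sigma$; only the former contribute to $Z_{\tilde S_n}(\tilde\sigma)$. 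A direct calculation gives $\tilde c_i\tilde\sigma\tilde c_i^{-1}=\theta^{(\lambda_i-1)(n-k-\lambda_i+1)}\tilde\sigma$, with analogous formulas governing the cycle-swap generators.

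Next I would run both directions. If $\lambda$ has a repeated part $\ell$, the wreath factor $S_{m_i}$ supplies a cycle-swap of $S_n$-parity $(-1)^\ell$, which, possibly after correction by a cycle-shift to absorb any $\theta$-obstruction, lifts to an odd element of $Z_{\tilde S_n}(\tilde\sigma)$, so $C$ does not split. Conversely, if $\lambda\in DP_n^+$, then $\lambda$ has distinct parts (so $Z_{S_n}(\sigma)$ is abelian, generated by the cycles $c_i$) and $n-k$ is even. The formula above then shows that any $c_i$ with $\lambda_i$ even fails to centralize $\tilde\sigma$, since $(\lambda_i-1)(n-k-\lambda_i+1)$ is odd; the only centralizing cycles have $\lambda_i$ odd and hence are $S_n$-even, and since $\sigma$ itself is even by hypothesis, the subgroup they generate together with $\theta$ is contained in $\tilde A_n$. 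This gives $Z_{\tilde S_n}(\tilde\sigma)\subseteq\tilde A_n$ and hence splitting.

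The main obstacle will be the parity bookkeeping in the Schur cover. Because disjoint transpositions anti-commute up to $\theta$ rather than commute, naive lifts of elements of $Z_{S_n}(\sigma)$ need not lie in $Z_{\tilde S_n}(\tilde\sigma)$, and careful tracking of the $\theta^{(\lambda_i-1)(\lambda_j-1)}$ factors is needed both for the identification of the centralizer and for exhibiting the odd centralizing element in the repeated-part case. Once this sign calculus is carried out using the explicit Schur relations in the excerpt, the equivalence with $\lambda\in DP_n^+$ drops out by elementary combinatorics.
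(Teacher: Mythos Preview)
The paper does not give its own proof of this theorem: it is stated with citations to Schur and to Stembridge \cite{Sc11,St89} and then used as a black box (to feed into Lemma~\ref{l:splitsplit} and Corollary~\ref{c:sgncent}). So there is nothing in the paper to compare your argument against; your proposal is effectively a reconstruction of the classical proof that the paper simply imports.

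That said, your strategy is the standard and correct one. The index-two splitting criterion reduces the question to whether $Z_{\tilde S_n}(\tilde\sigma)\subseteq\tilde A_n$, and for $\lambda\in DP_n^+$ your computation $\tilde c_i\tilde\sigma\tilde c_i^{-1}=\theta^{(\lambda_i-1)(n-k-\lambda_i+1)}\tilde\sigma$ is right; since $n-k$ is even this exponent is $\lambda_i-1\pmod 2$, so the $\theta$-obstruction homomorphism $\phi$ and the sign character agree on each generator $c_i$, forcing $\ker\phi\subseteq A_n$ and hence splitting. The converse (repeated part $\Rightarrow$ no splitting) is where your sketch is thinnest: the phrase ``possibly after correction by a cycle-shift to absorb any $\theta$-obstruction'' hides a genuine case analysis, because multiplying an odd swap by a $\phi$-bad cycle $c_m$ also flips the $S_n$-parity. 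One has to treat separately the parity of the repeated part $\ell$ and compute $\phi$ on the swap explicitly (or, more cleanly, argue via the structure of $\phi$ and $\sgn$ on the full wreath-product centralizer). This is exactly the ``parity bookkeeping'' you flag as the main obstacle, and it does require a few more lines than you indicate, but there is no conceptual gap.
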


\begin{corollary} \label{c:sgncent}
The $sgn$-centre of $\bbc\widetilde{G}_-$ has basis \[\{C^{\sgn}_{\tilde{g}} : C(\tilde{g}) \text{ splits into two conjugacy classes in   } \widetilde{G}_{\overline{0}} \}.\]
\end{corollary}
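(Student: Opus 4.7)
The plan is to combine the three preceding results: \cite[Theorem 6.5]{CDO22} identifies $Z^{\sgn}(\bbc\widetilde{G}_-)$ with $\tfrac{1-\theta}{2}Z^\theta(\bbc\widetilde{G})$, \cite[Proposition 6.4]{CDO22} exhibits a spanning set of class sums $C^\theta_{\tilde g}$ for $Z^\theta(\bbc\widetilde{G})$, and Lemma \ref{l:splitsplit} tells us precisely when the resulting projections are nonzero. My strategy is therefore to descend from $Z^\theta$ to $Z^{\sgn}$ via the idempotent $\tfrac{1-\theta}{2}$, then prune the spanning set of its zero elements, then check linear independence.

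First I would verify the identification $\tfrac{1-\theta}{2}C^\theta_{\tilde g} = \tfrac{1-\theta}{2}C^{\sgn}_{\tilde g}$, using $\theta\cdot \tfrac{1-\theta}{2} = -\tfrac{1-\theta}{2}$ together with the fact that $\theta$ is central (so the length parity $|l(\tilde h)|$ is unaffected by multiplication by $\theta$). This gives a spanning set $\{C^{\sgn}_{\tilde g}\}_{\tilde g\in\widetilde{G}}$ for the $\sgn$-centre. Next I would cut this spanning set down. For odd $\tilde g$, the stabiliser $\Stab_{\widetilde G}(\tilde g)$ contains $\tilde g$ itself and thus meets both parities, so a short coset-pairing argument over $\Stab_{\widetilde G}(\tilde g)$ in the definition of $C^{\sgn}_{\tilde g}$ forces the sum of signs on each conjugate to vanish; for even $\tilde g$, Lemma \ref{l:splitsplit} gives exactly the splitting criterion in the statement. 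Finally, choosing one representative per $\widetilde G$-conjugacy class, the nonzero $C^{\sgn}_{\tilde g}$'s are supported on disjoint subsets of the standard $\widetilde G$-basis of $\bbc\widetilde G$ (namely on the distinct $\widetilde G$-conjugacy classes), and so are automatically linearly independent.

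There is no serious obstacle here, since the heavy lifting was done in \cite{CDO22}. The only places needing care are the projection identity $\tfrac{1-\theta}{2}C^\theta_{\tilde g} = \tfrac{1-\theta}{2}C^{\sgn}_{\tilde g}$ and the vanishing for odd $\tilde g$; both reduce to one-line computations, after which the corollary is just the bookkeeping combination of Lemma \ref{l:splitsplit} with the spanning sets above.
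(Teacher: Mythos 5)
Your strategy matches the implicit derivation behind the corollary: the spanning set $\{C^{\sgn}_{\tilde g}\}$ is already noted after the citation of \cite[Theorem~6.5]{CDO22}, and Lemma~\ref{l:splitsplit} handles the even $\tilde g$. The step you supply that the paper leaves unstated is the vanishing $C^{\sgn}_{\tilde g}=0$ for odd $\tilde g$, and your argument for it is the right one: the stabiliser $\Stab_{\widetilde G}(\tilde g)$ contains $\tilde g$ itself (odd) and $\theta$ (even), so it meets both parities, forcing the alternating sum over each coset to cancel. This is genuinely necessary, since Lemma~\ref{l:splitsplit} only addresses even $\tilde g$, and the splitting condition in the statement is vacuous for odd $\tilde g$.

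The one place to tighten is the linear-independence step. The nonzero $C^{\sgn}_{\tilde g}$'s are not freely indexed by distinct $\widetilde G$-conjugacy classes: since $\theta$ is central, $C^{\sgn}_{\theta\tilde g}=\theta\, C^{\sgn}_{\tilde g}=-C^{\sgn}_{\tilde g}$ in $\bbc\widetilde G_-$, and in the ``already-split'' situation (the $C^\pm_\lambda$ case in the cited result of Schur/Stembridge) one has $C(\tilde g)\ne C(\theta\tilde g)$ even though the two class sums only differ by a sign. Moreover, after applying $\tfrac{1-\theta}{2}$ the support of $C^{\sgn}_{\tilde g}$ inside $\bbc\widetilde G$ sits on $C(\tilde g)\cup C(\theta\tilde g)$, so ``disjoint $\widetilde G$-conjugacy classes'' is not quite the right disjointness to invoke. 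The fix is small: work in $\bbc\widetilde G_-$ and choose one representative $\tilde g$ per $\theta$-orbit of $\widetilde G$-conjugacy classes (equivalently, per $\pi_V(G)$-conjugacy class upstairs), and then the supports are genuinely disjoint. The corollary as stated arguably glosses over the same point, so your proof is consistent with the paper's level of precision; with that one adjustment it is complete.
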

\noindent Due to Corollary \ref{c:sgncent} we may assume that any $wD$-admissible element in $\bbc\widetilde{G}_-$ is even. 
 
\begin{theorem} \label{t:vadmiss}
The $wD$-admissible elements in $\bbc\widetilde{G}_-$ are equal to the real-span of the set \[\{C^{\sgn}_{\tilde{g}} + C^{\sgn}_{\tilde{g}^{-1}}, \sqrt{-1}(C^{\sgn}_{\tilde{g}} - C^{\sgn}_{\tilde{g}^{-1}}) : C(\tilde{g}) \text{ splits into two conjugacy classes in   } \widetilde{G}_{\overline{0}} \}.\]
\end{theorem}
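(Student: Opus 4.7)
The plan is to combine Corollary \ref{c:sgncent}, which gives a basis $\{C^{\sgn}_{\tilde{g}}\}$ of $Z^{\sgn}(\bbc\widetilde{G}_-)$ indexed by the $\widetilde{G}$-conjugacy classes that split in $\widetilde{G}_{\overline{0}}$, with an analysis of how the anti-linear involution $\bullet$ acts on this basis. The $V$-admissible elements will then be read off as the $\bbr$-subspace of $\bullet$-fixed vectors, decomposed orbit-by-orbit under the involution $[\tilde{g}]\mapsto[\tilde{g}^{-1}]$ on split classes. By the remark immediately preceding the theorem, we may assume throughout that $\C$ is even.

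First I compute the action of $\bullet$ on the basis. Using anti-multiplicativity of $\bullet$, the rule $\tilde{h}^{\bullet}=\tilde{h}^{-1}$, and the fact that the extra sign $(-1)^{|l(\tilde{h})|}$ is real (so unaffected by anti-linearity) and that no extra parity sign appears when inverting an even element, a short check yields
\begin{equation*}
(C^{\sgn}_{\tilde{g}})^{\bullet} = \sum_{\tilde{h}\in\widetilde{G}}(-1)^{|l(\tilde{h})|}\tilde{h}^{-1}\tilde{g}^{-1}\tilde{h} = C^{\sgn}_{\tilde{g}^{-1}}
\end{equation*}
for every even $\tilde{g}$. Thus $\bullet$ is an anti-linear involution of $Z^{\sgn}(\bbc\widetilde{G}_-)$ that permutes the basis through inversion of conjugacy classes.

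Next, writing $\C = \sum_{i}\lambda_{i}\,C^{\sgn}_{\tilde{g}_{i}}$ for a choice of representatives of split classes, the condition $\C^{\bullet}=\C$ decouples into independent constraints on each orbit of the involution $[\tilde{g}]\mapsto[\tilde{g}^{-1}]$. On an orbit of size two, the fixed condition $\overline{\lambda_{\tilde{g}}}=\lambda_{\tilde{g}^{-1}}$, with $\lambda_{\tilde{g}}=a+b\sqrt{-1}$ for $a,b\in\bbr$, yields precisely the real combination $a(C^{\sgn}_{\tilde{g}}+C^{\sgn}_{\tilde{g}^{-1}})+b\sqrt{-1}(C^{\sgn}_{\tilde{g}}-C^{\sgn}_{\tilde{g}^{-1}})$, matching the proposed generators. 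On a singleton orbit $[\tilde{g}]=[\tilde{g}^{-1}]$, we have $C^{\sgn}_{\tilde{g}^{-1}}=\varepsilon\,C^{\sgn}_{\tilde{g}}$ for $\varepsilon=(-1)^{|l(\tilde{h})|}\in\{\pm 1\}$ and any $\tilde{h}$ with $\tilde{h}^{-1}\tilde{g}\tilde{h}=\tilde{g}^{-1}$; the $\bullet$-fixed line is then $\bbr\cdot C^{\sgn}_{\tilde{g}}$ if $\varepsilon=+1$ and $\bbr\sqrt{-1}\cdot C^{\sgn}_{\tilde{g}}$ if $\varepsilon=-1$. In each case exactly one of the two proposed generators is non-zero and covers the fixed line, while the other vanishes identically. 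Summing over orbits gives both inclusions, and the converse direction (that every real combination of the listed elements is $V$-admissible) is immediate from the computation above together with $\bbr$ being $\bullet$-fixed.

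The main delicate point will be verifying, on a singleton orbit, that $\varepsilon$ is independent of the choice of conjugator $\tilde{h}$. This reduces to the standard fact that the class of $\tilde{g}$ splits in $\widetilde{G}_{\overline{0}}$ exactly when the centraliser $Z_{\widetilde{G}}(\tilde{g})$ is contained in $\widetilde{G}_{\overline{0}}$, so that any two conjugators between $\tilde{g}$ and $\tilde{g}^{-1}$ differ by an element of $\widetilde{G}_{\overline{0}}$ and therefore have the same parity. Once this bookkeeping is settled, the remainder is routine linear algebra over $\bbr$.
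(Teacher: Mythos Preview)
Your proposal is correct and follows essentially the same strategy as the paper: take the basis $\{C^{\sgn}_{\tilde{g}}\}$ of the $\sgn$-centre from Corollary~\ref{c:sgncent}, observe that $\bullet$ acts on it by $C^{\sgn}_{\tilde{g}}\mapsto C^{\sgn}_{\tilde{g}^{-1}}$, and extract the real subspace of $\bullet$-fixed vectors by symmetrising/antisymmetrising. Your treatment is in fact more careful than the paper's short argument, since you separate the size-two and singleton orbits of the inversion involution and verify that the sign $\varepsilon$ on a singleton orbit is well-defined via the centraliser being contained in $\widetilde{G}_{\overline{0}}$; the paper simply asserts the spanning without this case distinction.
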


\begin{proof}
    The $wD$-admissible elements are elements in the $\sgn$-centre which are self adjoint. Since $\rho \tilde{g}^\bullet$ = $\rho \tilde{g}^{-1}$, then taking a basis for the $\sgn$-centre from Corollary \ref{c:sgncent} and adding or subtracting $C^{\sgn}_{\tilde{g}^{-1}})$ enforces this set to be self adjoint (or skew adjoint respectively). Multiplying by $\sqrt(-1)$ forces the skew adjoint operators to be self adjoint. This then spans all self-adjoint operators in the $\sgn$-centre. 
\end{proof}

 \begin{corollary}
\label{l:typeAex}
Let $G=S_n$ and let $\{g\}$ be the set of elements in $S_n$ associated to an even partition $\lambda$ which has distinct cycles. Then $C(\tilde{g})$ splits in $\tilde{A}_n = (\tilde{S}_n)_{\overline{0}}$. Then from Lemma \ref{l:splitsplit}, $C^{\sgn}_{\tilde{g}} \neq 0$. The group $\widetilde{S}_n$ is ambivalent, that is, $\tilde{g}^{-1}$ is always conjugate to $\tilde{g}$. Therefore every nonzero element $C^{\sgn}_{\tilde{g}} = \frac{1}{2}(C^{\sgn}_{\tilde{g}} + C^{\sgn}_{\tilde{g}^{-1}})$ is an admissible element and $C^{\sgn}_{\tilde{g}} - C^{\sgn}_{\tilde{g}^{-1}} =0$ for every $C^{\sgn}_{\tilde{g}}$.
\end{corollary}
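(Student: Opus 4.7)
The plan is to chain together the structural results already assembled in this section, with the cycle-type hypothesis playing a bookkeeping role at each step. Fix $g \in S_n$ whose cycle type is an even partition $\lambda$ with all distinct parts, so $\lambda \in DP_n^+$; write $\tilde{g}$ for a preimage in $\widetilde{S}_n$.

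First I would establish non-vanishing: by the cited Schur/Stembridge theorem, $\lambda \in DP_n^+$ is precisely the condition that $C_\lambda$ splits into two $\widetilde{A}_n = (\widetilde{S}_n)_{\overline{0}}$ conjugacy classes in $\widetilde{S}_n$. Feeding this into Lemma \ref{l:splitsplit} gives $C^{\sgn}_{\tilde{g}} \neq 0$, and by Corollary \ref{c:sgncent} the element $C^{\sgn}_{\tilde{g}}$ lies in the distinguished basis of the $\sgn$-centre of $\bbc\widetilde{S}_n{}_-$.

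Second I would reduce the claim to a statement about $\widetilde{A}_n$-conjugacy. A direct substitution $\tilde{h}\mapsto \tilde{k}\tilde{h}$ in the defining sum shows that if $\tilde{g}^{-1} = \tilde{k}^{-1} \tilde{g} \tilde{k}$ then $C^{\sgn}_{\tilde{g}^{-1}} = (-1)^{|l(\tilde{k})|} C^{\sgn}_{\tilde{g}}$, so the equality $C^{\sgn}_{\tilde{g}} = C^{\sgn}_{\tilde{g}^{-1}}$ is equivalent to $\tilde{g}$ and $\tilde{g}^{-1}$ lying in the same $\widetilde{A}_n$-orbit. Here I would use the ambivalence of $\widetilde{S}_n$, together with the fact that for a permutation whose cycle lengths are all distinct, inverses can be produced by conjugation by a product of cycle-reversing permutations, each of which can be chosen inside the cycle supports; combining this with the parity bookkeeping forced by $n - \ell(\lambda)$ even, one checks that the conjugating element can be taken in $\widetilde{A}_n$. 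Hence $C^{\sgn}_{\tilde{g}} - C^{\sgn}_{\tilde{g}^{-1}} = 0$ and $\tfrac{1}{2}(C^{\sgn}_{\tilde{g}} + C^{\sgn}_{\tilde{g}^{-1}}) = C^{\sgn}_{\tilde{g}}$.

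Third, combining the previous two steps with Theorem \ref{t:vadmiss} immediately gives that $C^{\sgn}_{\tilde{g}}$ is $V$-admissible, as it is exhibited as one of the explicit self-adjoint basis elements.

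The main obstacle is the second step, namely producing the conjugator in $\widetilde{A}_n$ rather than merely in $\widetilde{S}_n$: abstract ambivalence of $\widetilde{S}_n$ only delivers some $\tilde{k} \in \widetilde{S}_n$, and one must use the distinct-parts hypothesis on $\lambda$ to refine its parity. This is precisely the place where the splitting of $C(\tilde{g})$ in $\widetilde{A}_n$ from the first step is doing double duty, since the two $\widetilde{A}_n$-classes inside $C(\tilde{g})$ must each be closed under inversion once ambivalence is known at the $\widetilde{S}_n$-level — an observation I would formalise by pairing the splitting with the explicit cycle-reversal construction above.
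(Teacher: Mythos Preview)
Your three steps match the paper exactly --- in fact the paper gives no separate proof here, the reasoning being folded into the corollary statement itself (the Schur/Stembridge theorem for the splitting, Lemma~\ref{l:splitsplit} for nonvanishing, ambivalence of $\widetilde{S}_n$ to identify $C^{\sgn}_{\tilde g}$ with $C^{\sgn}_{\tilde g^{-1}}$, then Theorem~\ref{t:vadmiss}).

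You go beyond the paper in observing that $\widetilde{S}_n$-ambivalence alone only yields $C^{\sgn}_{\tilde g^{-1}} = (-1)^{|l(\tilde k)|}C^{\sgn}_{\tilde g}$ and that the conjugator $\tilde k$ must be arranged to lie in $\widetilde{A}_n$; the paper simply asserts the conclusion. Your explicit cycle-reversal construction is a reasonable way to pin down the sign, but drop the closing parenthetical (that the two $\widetilde{A}_n$-classes inside $C(\tilde g)$ are automatically inversion-closed once $\widetilde{S}_n$-ambivalence is known): that is precisely the statement you are trying to establish, so it cannot be invoked as an independent observation --- rely on the explicit construction instead.
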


\section{Vogan's Dirac morphism}

In this section, we show that the original proof of Vogan's morphism does not apply to $\D_\C$ unless $\C =0$. Note that this is a correction on the previous version of this preprint. Akin to the original proof given in \cite{HP02} we define an odd derivation related to the (warped) Dirac operator $\D_\C$.

 \subsection{The linear map $d_\C$}

\begin{definition}
Let $\D_\C$ be a warped Dirac operator as defined in Definition \ref{d:vopfam}. We define a map from $\bbH \otimes \Cc$ to $\bbH \otimes \Cc$.

\[
d_\C: \bbh \otimes \clif \to \bbh \otimes \clif,
\]
where $d_\C(a) = \D_\C a - \epsilon(a) \D_\C$, for $a \in \bbh \otimes \clif$.

\end{definition}

\begin{remark}
The map $d_\C$ is an odd derivation, i.e.,
\[
d_\C(ab) = d_\C(a)b + \epsilon(a)d_\C(b)
\]
for all $a,b \in \bbh \otimes \clif$.
\end{remark}
\begin{lemma} The image under $\rho$ of $\bbc \widetilde{G}$ is in the kernel of $d_\C$
\[
\rho (\bbc \widetilde{G}) \subset \ker d_\C.
\]
\end{lemma}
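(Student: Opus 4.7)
The plan is to reduce the statement to a one-line computation on group elements using the $\sgn$-invariance of $\D_\C$ established in the previous lemma. By $\bbc$-linearity of $d_\C$, it suffices to verify $d_\C(\rho(\tilde g)) = 0$ for each $\tilde g \in \widetilde G$, as group elements span $\bbc\widetilde G$.

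The first genuine step is to pin down the $\bbz_2$-degree of $\rho(\tilde g)$ in $\bbh \otimes \clif$. Writing $\tilde g = (\tilde g', h)$ with $\tilde g' \in \widetilde{\pi_V(G)}$ and $h \in \ker\pi_V$, one has $\rho(\tilde g) = h\, p(\tilde g') \otimes \tilde g'$. Since the $\bbh$-tensor factor lives in degree $\bar 0$ by our extension of the grading, the parity of $\rho(\tilde g)$ equals the parity of $\tilde g'$ in $\clif$, which is the length of $\tilde g'$ in the $\Pin$ presentation. This length parity is precisely $\det_{\pi_V}(p(\tilde g)) = \sgn(\tilde g)$, so $\epsilon(\rho(\tilde g)) = \sgn(\tilde g)$.

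The rest is substitution. The $\sgn$-invariance lemma for $\D_\C$ rearranges to
\[
\D_\C \rho(\tilde g) = \sgn(\tilde g)\, \rho(\tilde g)\, \D_\C,
\]
so plugging into the (graded commutator) definition of $d_\C$ gives
\[
d_\C(\rho(\tilde g)) \;=\; \D_\C \rho(\tilde g) - \epsilon(\rho(\tilde g))\, \rho(\tilde g)\, \D_\C \;=\; \sgn(\tilde g)\,\rho(\tilde g)\,\D_\C - \sgn(\tilde g)\,\rho(\tilde g)\,\D_\C \;=\; 0.
\]
Linearity extends this to $\rho(\bbc\widetilde G) \subset \ker d_\C$. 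There is no real obstacle here; the one thing to be careful about is the identification $\epsilon(\rho(\tilde g)) = \sgn(\tilde g)$, which is exactly why the $\sgn$-twisted conjugation on $\D_\C$ is converted into honest commutation with $\rho(\tilde g)$ under the super-commutator $d_\C$.
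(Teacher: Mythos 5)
Your argument is correct and is precisely the detailed unfolding of the paper's one-line proof (``This follows from the fact that $\D_\C$ is $\sgn\text{-}\widetilde{G}$ invariant''). The key observation you make explicit --- that $\epsilon(\rho(\tilde g)) = \sgn(\tilde g)$, since the Clifford-degree parity of an element of $\Pin$ matches the determinant of its image in the orthogonal group --- is exactly what makes the $\sgn$-twisted conjugation collapse the graded commutator $d_\C(\rho(\tilde g))$ to zero, so this is the same approach with the implicit step spelled out.
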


\begin{proof}
This follows from the fact that $\D_\C$ is $\sgn-\widetilde{G}$ invariant. 
\end{proof}
\noindent  The operator $\D_\C$ intertwines the $\sgn$ and $\triv$ $\widetilde{G}$ isotypic components $\bbH \otimes \Cc ^{\triv}$ and $\bbH \otimes \Cc^{\sgn}$.
We define $d_\C^{\triv}$ and $d_\C^{\sgn}$ to be $d_\C$ restricted to the $\triv$ and $\sgn$ isotypic components respectively. 
Since the kernel of $d_\C$, contains $\rho(\bbc\widetilde{G})$, then the kernel of $d_\C^{\triv}$, contains the $\triv$-isotypic component $\rho(\bbc\widetilde{G}^{\widetilde{G}})$.

\begin{theorem}\label{t:dker}
    The only warped Dirac operator such that the kernel of $d_\C^{\triv}$ equals:
\[
\ker d_\C^{\triv} = \im d_\C^{\sgn} \oplus \rho(\bbc\widetilde{G}^{\widetilde{G}})
\]
Is the original Dirac operator $\D_0$.

 \end{theorem}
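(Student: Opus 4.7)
The plan is to adapt the filtered Koszul argument used for Vogan's conjecture for graded affine Hecke algebras \cite{BCT12} and for Drinfeld algebras \cite{C16, Ca20} to the twisted operator $\D_\C$. Combine the Drinfeld filtration on $\bbH$ (where $v \in V$ has degree $1$) with the Clifford filtration on $\Cc$ (where $v \in V \subset \Cc$ has degree $1$) to obtain a total filtration on $\bbH \otimes \Cc$, whose associated graded is $\bbc G \rtimes (S(V) \otimes \Lambda(V))$. Under this filtration the leading symbol of $\D$ is the Koszul element $\gr \D = \sum v_i \otimes v^i$ of total degree $2$, while $\rho \C \in \rho \bbc \widetilde{G}$ contributes only terms of total degree equal to its Clifford length.

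On the associated graded, the induced odd derivation $d_{\gr}(a) = [\gr\D, a]_\epsilon$ is a genuine differential, since
\[
(\gr \D)^2 \;=\; \sum_{i,j} v_i v_j \otimes v^i \wedge v^j \;=\; 0
\]
by symmetry of $v_i v_j$ in $S(V)$ and antisymmetry of $v^i \wedge v^j$ in $\Lambda(V)$. The standard Koszul acyclicity of $S(V) \otimes \Lambda(V)$ concentrates the cohomology of $d_{\gr}$ in total degree $0$, where it is $\bbc$. Smashing with $\bbc G$ and taking the $\triv$-isotypic component for the conjugation action of $\widetilde{G}$, the graded cohomology is identified with $\rho(\bbc \widetilde{G}^{\widetilde{G}})$, sitting in filtration degree zero.

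Lifting from graded to filtered is done by induction on filtration degree. Given $a \in \ker d_\C^{\triv}$ of top filtration degree $n$, its leading symbol $\gr a$ lies in $\ker d_{\gr}^{\triv}$, so by the graded result there is a sgn-isotypic $\bar b$ with $\gr a - d_{\gr}(\bar b) \in \rho(\bbc \widetilde{G}^{\widetilde{G}})$ (nonzero only when $n = 0$). Lifting $\bar b$ to an element $b$ of filtration degree $n-1$ and subtracting $d_\C(b)$ from $a$ strictly reduces the filtration degree, and iterating gives the desired decomposition. Directness of the sum is immediate because $\rho(\bbc \widetilde{G}^{\widetilde{G}})$ sits in filtration degree zero while $\im d_\C^{\sgn}$ has strictly positive filtration degree.

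The main obstacle is that $d_\C^2(a) = [\D_\C^2, a]$ is not identically zero, so $\im d_\C^{\sgn}$ is not a priori contained in $\ker d_\C^{\triv}$. However, since $(\gr \D)^2 = 0$, the element $\D_\C^2$ has strictly smaller total filtration degree than one might naively expect, so this defect only appears in strictly lower filtration degree and is absorbed at the next inductive step. The technical core of the proof is bookkeeping these lower-order corrections—both the contributions of $\rho \C$ and the Clifford correction terms from Theorem \ref{t:Dsquare}—to ensure that each inductive reduction genuinely produces an element of $\ker d_\C^{\triv}$ and that the induction terminates in degree zero with a residual term in $\rho(\bbc \widetilde{G}^{\widetilde{G}})$.
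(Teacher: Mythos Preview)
Your proposal is correct and follows essentially the same strategy as the paper: pass to the associated graded, where the perturbation $\rho\C$ drops out so that $\bar d_\C = \bar d_0$, invoke the Koszul acyclicity of $S(V)\otimes\Lambda(V)$ (this is exactly the content of the cited \cite[Proposition~4.14]{BCT12} and \cite{C16}), and then lift by induction on filtration degree. The paper is considerably terser---it simply observes $\gr\D_\C = \gr\D_0$ and defers both the graded computation and the inductive lifting verbatim to \cite{BCT12,C16}---whereas you spell out the Koszul mechanism and the bookkeeping of lower-order corrections (including the $d_\C^2\neq 0$ defect) explicitly; but the underlying argument is the same.
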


 \begin{proof}
     This follows from the fact that, for $\C \neq 0$, $\D_\C^2$ is not in the centre of the algebra $\bbH \otimes \Cc$. Therefore, the derivation $d_\omega$ does not square to zero, thus the kernel cannot be contained in the image and so 
     \[ \ker d_\omega^{\triv} \neq \im d_\omega^{\sgn} \oplus \rho(\bbc\widetilde{G}^{\widetilde{G}}\] 
     for $\C \neq 0$.
 \end{proof}

\subsection{Dirac cohomology} 
Let $(X,\pi_X)$ be a representation of $\bbH$, We say that $X$ is an admissible module if the decomposition of $X$ into $\Omega_\bbh$-generalized eigenspaces
\[ X = \bigoplus_{\lambda \in \bbc} X_\lambda\]
is such that every $X_\lambda$ is finite-dimensional. 
\begin{definition}
    For an irreducible $\bbH$-representation $(X,\pi_X)$, let $\chi: Z(\bbH) \to \bbc$ be the infinitesimal character $\pi_X |_{Z(\bbH)}$.
\end{definition}
\begin{definition}
Let $X$ be an admissible $\bbH$-module and let $S$ be a spinor for $\Cc$, then $X \otimes S$ is a $\bbH \otimes \Cc$ module and $\D_\C \in \bbh \otimes \Cc$ acts
    \[
(D_\C)_X : X \otimes S \to X \otimes S, 
\] The Dirac $\C$-cohomology of $X$ (and $S$) is defined as
\[
H(X,\C) = \ker (D_\C)_X/ \ker (D_\C)_X \cap \im (D_\C)_X.
\]

\end{definition}
\noindent Since $\D_\C$ $\sgn$-commutes with $\widetilde{G}$ the Dirac $\C$-cohomology of an admissible $X$ is a finite dimension $\widetilde{G}$ module, or zero. Outside of $\C=0$ is it possible that non-zero warped Dirac cohomology $H(X,\C)$ relates the infinitesimal character of $X$ with a character of $\tilde{G}$? If this were true, the facts below that, if there are enough $wD$-admissible elements then every unitary module has non-zero warped Dirac cohomology for some $wD$-admissible $\C$, would become a very strong tool for studying unitary modules.

\noindent Recall Theorem \ref{t:vadmiss} states that all $wD$-admissible elements for $\bbc\tilde{G}_-$ are even. Suppose that $\C$ is $wD$-admissible, because every group element occurring in $\C$ is even then $\C$ and $\D$ commute.

\begin{proposition}\label{p:unitarycoh}
If $X$ is a $\bullet$-unitary $\bbH$-module, then $H(X,\C) = \ker(\D_\C)$.
\end{proposition}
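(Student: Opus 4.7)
The plan is to exploit the self-adjointness of $\D_\C$ on the positive-definite Hermitian space $X\otimes S$ to show that $\ker(\D_\C)_X$ and $\im(\D_\C)_X$ intersect trivially, whence $H(X,\C)$ collapses onto $\ker(\D_\C)_X$.

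First I would verify that $(\D_\C)_X$ is self-adjoint with respect to $\langle\,,\,\rangle_{X\otimes S}$. The original Dirac element satisfies $\D^\star = \D$ (Section \ref{s:diraczero}). For the correction $\rho\C$, the $V$-admissibility condition $\C^\bullet = \C$, together with the fact that every $\tilde g$ appearing in $\C$ is even (Corollary \ref{c:sgncent})---so that $\tilde g^\ast = \tilde g^{-1}$ on the Clifford side and $p(\tilde g)^\bullet = p(\tilde g)^{-1}$ on the group side---yields $(\rho\C)^\star = \rho\C$. Hence $\D_\C^\star = \D_\C$. Since $X$ is $\bullet$-unitary, the induced $\star$-Hermitian form on $X\otimes S$ is positive definite (Section \ref{Forms}), so $(\D_\C)_X$ is a genuine self-adjoint operator on an inner product space.

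Next I would show $\ker(\D_\C)_X \cap \im(\D_\C)_X = 0$. If $z$ lies in this intersection, write $z = (\D_\C)_X w$ with $(\D_\C)_X z = 0$; then
\[
\langle z,z\rangle_{X\otimes S} = \langle (\D_\C)_X w, z\rangle_{X\otimes S} = \langle w, (\D_\C)_X z\rangle_{X\otimes S} = 0,
\]
and positive-definiteness forces $z=0$. The proposition then follows directly from the definition $H(X,\C) = \ker(\D_\C)_X / (\ker(\D_\C)_X \cap \im(\D_\C)_X)$.

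The conceptual content is essentially standard Hodge theory, so the only mild obstacle is the explicit self-adjointness computation for $\rho\C$; one must carefully track how $\bullet$ and $\ast$ combine under $\rho$ on the even elements $\tilde g$ spanning the $V$-admissible subspace. Given the remark preceding the proposition that $\C$ and $\D$ commute because $\C$ is even, this verification is entirely formal and the rest of the argument is a one-line inner-product computation.
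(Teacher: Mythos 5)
Your proof is correct and matches the paper's argument: both exploit self-adjointness of $\D_\C$ on the positive-definite space $X\otimes S$ to show $\ker(\D_\C)_X$ and $\im(\D_\C)_X$ meet trivially, collapsing the cohomology onto the kernel. The paper states the orthogonality of image and kernel in a single line, relying on $\D_\C^\star=\D_\C$ (already built into Definition \ref{d:voperators}), whereas you also re-derive the self-adjointness from $V$-admissibility; that extra verification is sound but not strictly needed.
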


\begin{proof}  If $X$ is $\bullet$-Hermitian then the image and kernel of $\D_\C$ are orthogonal with respect to $\langle \,,\,\rangle_{X \otimes S}$ and hence $\ker(\pi\D_\C)\cap \im(\pi\D_\C)={0}$
\end{proof}

\begin{proposition}\label{p:nonzerocoh}
Let $(\pi_X,X)$ be a $\bullet$-unitary module for $\bbH$. Suppose that there exists an admissible element $\C \in \bbc \widetilde{G}_-$ such that $\pi_X(\C) \neq 0$. Then, there exists a $wD$-admissible $\lambda\C \in \bbc \widetilde{G}_-$ such that $H(X,\lambda\C) \neq 0$.
\end{proposition}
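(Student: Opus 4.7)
The plan is to reduce the problem to a spectral argument on $X \otimes S$. By Proposition \ref{p:unitarycoh}, since $X$ is $\bullet$-unitary, for any $V$-admissible element $\C'$ one has $H(X,\C') = \ker(\D_{\C'})$, so it suffices to exhibit a real scalar $\lambda$ (so that $\lambda\C$ is again $V$-admissible: the condition $(\lambda\C)^\bullet = \lambda\C$ and $\sgn$-centrality are both preserved by real scaling) such that $\ker(\D_{\lambda\C}) \neq 0$ on $X \otimes S$.

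The key structural input is that $\D$ and $\rho(\C)$ are commuting self-adjoint operators on $X \otimes S$. Self-adjointness of $\D$ is the standard statement on a unitary module, while $\rho(\C)^\star = \rho(\C)$ follows from $\C^\bullet = \C$, part of $V$-admissibility. Commutativity is the remark placed immediately before the proposition: Theorem \ref{t:vadmiss} exhibits any $V$-admissible $\C$ as a real-linear combination of the class sums $C^{\sgn}_{\tilde g}$ with $\tilde g \in \widetilde{G}_{\bar 0}$, so every group element appearing in $\C$ has $\sgn = +1$, and therefore $\rho(\C)$ commutes with $\D$ by the $\sgn$-intertwining lemma of Section \ref{s:diraczero}.

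With that set up, simultaneously diagonalise $\D$ and $\rho(\C)$ as commuting self-adjoint operators on the Hermitian space $X \otimes S$, writing $X \otimes S = \bigoplus_{(d,c) \in \bbr^2} W_{d,c}$ where $\D|_{W_{d,c}} = d\cdot\Id$ and $\rho(\C)|_{W_{d,c}} = c \cdot \Id$. The hypothesis $\pi_X(\C) \neq 0$ means precisely that $\rho(\C)$ acts non-trivially on $X \otimes S$, so since $\rho(\C)$ is self-adjoint there must be some summand $W_{d,c}$ with $c \neq 0$. Taking $\lambda = -d/c \in \bbr$ gives $\D_{\lambda\C}|_{W_{d,c}} = (d + \lambda c)\Id = 0$, so $W_{d,c} \subseteq \ker(\D_{\lambda\C})$, and Proposition \ref{p:unitarycoh} finishes the proof.

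The only genuine verification required is that $\pi_X(\C) \neq 0$ really does force $\rho(\C)$ to be non-zero as an operator on $X \otimes S$; this is immediate under the natural reading of the hypothesis (namely, that $\pi_X \otimes \sigma$ applied to $\rho(\C)$ is nonzero), and once it is granted the rest is routine spectral theory. No obstacle of substance remains; the value of the argument lies mostly in the observation that $V$-admissibility was designed precisely so that $[\D,\rho(\C)] = 0$, turning the cohomology question into an elementary eigenvalue-matching problem.
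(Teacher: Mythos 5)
Your argument is correct and follows essentially the same route as the paper: exploit that $\rho(\C)$ and $\D$ are commuting self-adjoint operators on the positive-definite space $X\otimes S$, find a common eigenspace on which $\rho(\C)$ acts by a nonzero scalar, and rescale $\C$ by a real number to force a kernel. The one genuine difference is that you diagonalise $\D$ and $\rho(\C)$ directly and take $\lambda = -d/c$, whereas the paper passes to $\D^2$ and $\rho(\C)^2$, factors $\D^2 - \lambda^2\rho(\C)^2 = (\D - \lambda\rho(\C))(\D + \lambda\rho(\C))$, and then has to observe that one of the two factors must have nontrivial kernel (i.e.\ either $\D_{\lambda\C}$ or $\D_{-\lambda\C}$ works); your version avoids both the square root and that final dichotomy, so it is a modest but real streamlining of the same argument.
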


\begin{proof}
Since $X$ is Hermitian then $H(X,\C) = \ker(\D_C) = \ker (\D_C^2)$. We study the kernel of the operator $(\D_\C -2\rho(\C))\D_\C =\D^2 - \rho(\C)^2$. The elements $\D^2$ and $\rho(\C)$ commute, hence have simultaneous eigenvalues. Given $\pi(\rhoC)^2 \neq 0$ it has positive real eigenvalues. Let $\pi_{X \otimes S}\D^2$ and $\pi_{X \otimes S}(C)^2$ have positive simultaneous eigenvalues $d \geq 0$ and $c > 0$ respectively. One can modify $\D_\C$ to $\D_{\lambda\C}$, with $\lambda = \sqrt{\frac{c}{d}}$. This ensures that 
\[
\D_0^2 - \lambda^2\rho(\C)^2
\]
has a non-zero kernel on $X \otimes S$. Thus proving that there exists a non-zero kernel for the operator $(\D_\C -2\rho(\C))\D_\C$. Since the composition of injective functions is injective, we find that one of  $(\D_\C -2\rho(\C))$ or $\D_\C$ has a non-zero kernel.   
\end{proof}

\begin{remark}\label{r:actionofC}
When $G=S_n$, we have shown that the set of $wD$-admissible elements in $\bbr\tilde{W}_-$ is equal to conjugacy classes associated to even partitions. 
From \cite{K05}, $Z_0(\bbr\tilde{W}_-)$ is equal to $\{\sigma(M_1^2,\ldots,M_n^2)\mid \sigma \textup{ real symmetric polynomial}\}$. The element $\sigma(M_1^2,\ldots,M_n^2)$ acts on an irreducible representation by evaluating $\sigma$ at specific real values \cite[Corollary 6.3]{C18}. In particular, we can conclude that for every $\bbc\tilde{W}$-module there is a $wD$-admissible element that does not act by zero.
\end{remark}

\begin{theorem}
    Let $\bbH$ be a degenerate affine Hecke algebra associated to the symmetric group $S_n$. Then for any unitary $\bbH$-module $X$, there exists a warped Dirac operator $\D_\C$ such that 
    \[ H(X,\C) \neq 0.\]
\end{theorem}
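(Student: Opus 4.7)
The plan is to combine Proposition \ref{p:nonzerocoh} with Remark \ref{r:actionofC}. Proposition \ref{p:nonzerocoh} reduces the theorem to the task of exhibiting, for each unitary $\bbH$-module $X$, a single $V$-admissible element $\C \in \bbc\widetilde{G}_-$ with $\pi_X(\C) \neq 0$; in type $A$, Remark \ref{r:actionofC} furnishes exactly such an element. So the proof is essentially a matter of chaining these two results together.

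Concretely, given a unitary $\bbH(S_n)$-module $X$, I would form the $\bbH \otimes \Cc$-module $X \otimes S$ equipped with the positive-definite $\star$-Hermitian form of Section \ref{Forms}, so that the embedding $\rho \colon \bbc\widetilde{S}_n \hookrightarrow \bbH \otimes \Cc$ endows it with a non-zero $\widetilde{S}_n$-module structure. By Corollary \ref{l:typeAex} the $V$-admissible elements in the $S_n$ case are precisely the real span of $\sgn$-class sums $C^{\sgn}_{\tilde g}$ over split conjugacy classes of even partitions with distinct parts. Remark \ref{r:actionofC} identifies this admissible subspace with the even centre $Z_0(\bbr\widetilde{G}_-)$, which by \cite{K05} is generated by the real symmetric polynomials $\sigma(M_1^2,\ldots,M_n^2)$ in the squared Jucys-Murphy elements. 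Because these operators act simultaneously diagonalisably with real joint spectrum uniquely determining each $\widetilde{S}_n$-irreducible summand (by \cite[Corollary 6.3]{C18}), they cannot annihilate every non-zero summand of $X \otimes S$, and hence at least one $V$-admissible $\C$ satisfies $\pi_X(\C) \neq 0$. Proposition \ref{p:nonzerocoh} then produces a real scalar $\lambda$ for which the rescaled $V$-admissible element $\lambda \C$ yields a Vogan operator $\D_{\lambda\C} = \D + \rho(\lambda\C)$ with $H(X,\lambda\C) \neq 0$, which is the theorem's conclusion.

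The main obstacle I expect is the middle step: verifying that the real span of $V$-admissible class sums really coincides with the Jucys-Murphy description of the even centre, so that the spectral separation of the $M_i^2$'s transfers from the ordinary centre to the admissible subspace. In practice this reduces to checking that the identification of Corollary \ref{l:typeAex} with the generators of $Z_0$ from \cite{K05} leaves no gap, which is the precise content of Remark \ref{r:actionofC}. Once this compatibility is granted, the rest of the argument is a direct chain of citations together with a single scalar rescaling; no new computations are required.
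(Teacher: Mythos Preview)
Your proposal is correct and matches the paper's own (implicit) argument: the theorem is stated immediately after Remark \ref{r:actionofC} without a separate proof, precisely because it follows by chaining that remark with Proposition \ref{p:nonzerocoh}, exactly as you outline. The extra detail you supply about the Jucys--Murphy spectral description is more than the paper actually writes down, but it is the right justification for the key claim in Remark \ref{r:actionofC} that some admissible $\C$ acts nontrivially.
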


\subsection{warped Dirac inequalities}
Throughout this section, we assume that $\kappa_{g} = 0$ for all $g \in \ker \pi_V$.

\noindent Recall Theorem \ref{t:vadmiss} states that all $wD$-admissible elements for $\bbc\tilde{G}_-$ are even. Let $\C = \sum c_{\tilde{g}} \tilde{g}$, this then implies $\sgn(\tilde{g}) = 1$ for every $c_{\tilde{g}} \neq 0$.
Suppose that $\C$ is $wD$-admissible, because every group element occurring in $\C$ is even then $\C$ and $\D$ commute. On a $\star$-unitary module $X \otimes S$, the operator, $\D_\C^2$ is positive.

\begin{proposition}
    Let $(X, \pi_X)$ be a $\bullet$-unitary module, then $(X \otimes S, \pi_{X \otimes S})$ is $\star$-unitary and
    $
    \D_\C^2 = \D^2 +2 \D \rho(\C) + \rho(\C)^2 \geq 0. $ 
  Hence for every $wD$-admissible $\C$ the following inequality holds ,
    \[ \pi_X(\Omega_{\bbH}) \leq \pi_{X \otimes S}\Omega_{\tilde{G}} +   \pi_{X \otimes S}((\C + 2\D)\C) .\]
       
\end{proposition}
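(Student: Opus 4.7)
The plan is to deduce the inequality in three short steps from material already built up in the paper: positivity of $\D_\C^2$ on a unitary module, the explicit formula for $\D_\C^2$, and the fact that the $V$-admissibility of $\C$ makes $\D$ and $\rho(\C)$ commute.

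First I would verify that $X\otimes S$ carries a positive definite $\star$-invariant form, which is exactly the statement of Section~\ref{Forms}: the form on $X$ is $\bullet$-positive by assumption, the form on $S$ is $\ast$-positive by the choice of spinor, and their tensor product is $\star$-positive. Since $\D_\C$ is a Vogan operator, $\D_\C^\star=\D_\C$ by Definition~\ref{d:voperators}, so $\D_\C^2$ acts on $X\otimes S$ as a positive self-adjoint operator; in particular $\pi_{X\otimes S}(\D_\C^2)\geq 0$.

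Next I would expand $\D_\C^2=(\D+\rho(\C))^2=\D^2+\D\rho(\C)+\rho(\C)\D+\rho(\C)^2$. The crucial observation, already flagged just before the proposition, is that by Theorem~\ref{t:vadmiss} every group element occurring in a $V$-admissible $\C$ is even, so $\sgn(\rho(\C))=1$. Combined with the $\sgn$-commutation $\rho(\tilde{g})\D=\sgn(\tilde{g})\D\rho(\tilde{g})$, this gives $[\D,\rho(\C)]=0$ and hence
\[
\D_\C^2 \;=\; \D^2 + 2\D\rho(\C) + \rho(\C)^2 \;\geq\; 0.
\]

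Finally I would substitute the formula from Theorem~\ref{t:Dsquare}. Under the standing assumption $\kappa_g=0$ for all $g\in\ker\pi_V$, that theorem reduces to $\D^2=-\Omega_\bbH\otimes 1+\rho(\Omega_{\widetilde{G}})$, so the positivity above becomes
\[
-\Omega_\bbH\otimes 1 + \rho(\Omega_{\widetilde{G}}) + 2\D\rho(\C) + \rho(\C)^2 \;\geq\; 0
\]
as operators on $X\otimes S$. Rearranging and using that $\Omega_\bbH\in Z(\bbH)$ acts as $\pi_X(\Omega_\bbH)\otimes\Id_S$ on the tensor product, and that $2\D\rho(\C)+\rho(\C)^2=(\C+2\D)\C$ when pushed through $\rho$, we obtain the stated inequality.

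There is no real obstacle: every ingredient has been assembled earlier. The only points that require care are (a) noting that $\ker\pi_V$-contribution in Theorem~\ref{t:Dsquare} really does vanish under the running hypothesis of the subsection, and (b) justifying the commutation $[\D,\rho(\C)]=0$ via Theorem~\ref{t:vadmiss}. Both are clean bookkeeping rather than substantive work, which is why the statement is formulated as an immediate corollary of the structure theory developed for Vogan operators.
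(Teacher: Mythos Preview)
Your proof is correct and matches the paper's (implicit) approach: the paper does not give a separate proof of this proposition, treating it as an immediate consequence of the preceding setup, and your three steps are exactly the ingredients the paper has assembled---$\star$-unitarity from Section~\ref{Forms}, the commutation $[\D,\rho(\C)]=0$ noted in the sentence immediately before the proposition via Theorem~\ref{t:vadmiss}, and the $\D^2$ formula from Theorem~\ref{t:Dsquare} under the standing hypothesis $\kappa_g=0$.
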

\noindent Specialising the above inequality to $\C =0$ gives the Dirac inequality. Proposition \ref{p:nonzerocoh} states that if there exists an $\C$ such that $\pi_{X \otimes S}(\C) \neq 0$ then there exists a $wD$-admissible element such that the above inequality becomes an equality. For the Hecke algebra associated to the symmetric group there is always non-zero $\C$-cohomology, this implies that for a particular choice of $\C$, it is always possible to make the inequality above strict.

\section{Parthasarathy and warped Dirac operators}

\begin{proposition}

    Consider elements of the form $\Cd + \rho A_{\widetilde{G}}$, where $A_{\widetilde{G}} \in \bbc\widetilde{G}_-$. The only operator of this form that is both a Parthasarathy and warped Dirac operator is 
\end{proposition}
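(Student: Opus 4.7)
The plan is a parity-and-filtration argument showing that the Vogan and Parthasarathy conditions together force $A_{\widetilde{G}}=0$. I assume the standing hypothesis $\kappa_w = 0$ for $w \in \Ker\pi_V$ of the Parthasarathy section, without which $\Cd$ itself would already fail to be a Parthasarathy operator. Write $A$ for $A_{\widetilde{G}}$.

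First I consolidate what each condition says about $A$. Since $\Cd$ is $\sgn$-invariant under $\rho\widetilde{G}$, the Vogan condition forces $\rho A$ to be $\sgn$-invariant too, so $A \in Z_{\sgn}(\bbc\widetilde{G}_-)$; the self-adjointness requirement $(\Cd+\rho A)^\star = \Cd + \rho A$ yields $A^\bullet = A$. Thus $A$ is $V$-admissible, and Corollary \ref{c:sgncent} (with the subsequent remark) forces every $\tilde g$ appearing in $A$ to satisfy $\sgn(\tilde g) = +1$. Consequently $\rho A$ has even Clifford parity and commutes with $\Cd$, so
\[
(\Cd + \rho A)^2 = \Cd^2 + 2\Cd\,\rho A + \rho A^2.
\]
By Theorem \ref{t:Dsquare}, $\Cd^2 \in Z(\bbH)\otimes 1 + Z(\rho\bbc\widetilde{G})$; by $\sgn$-centrality, $A^2$ is genuinely central in $\bbc\widetilde{G}$, hence $\rho A^2 \in Z(\rho\bbc\widetilde{G})$. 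The Parthasarathy requirement therefore collapses to
\[
\Cd\,\rho A \;\in\; Z(\bbH)\otimes 1 + Z(\rho\bbc\widetilde{G}).
\]

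I then split this containment first by Clifford parity and then by $\bbH$-filtration. Since $\Cd$ is Clifford-odd and $\rho A$ is Clifford-even, $\Cd\,\rho A$ is Clifford-odd; but $Z(\bbH)\otimes 1$ is Clifford-scalar and so contributes nothing to the odd part. Hence $\Cd\,\rho A$ must lie in $Z(\rho\bbc\widetilde{G}) \subseteq \bbc G\otimes\Cc$, which sits in $\bbH$-filtration degree $0$. On the other hand $\Cd$ has $\bbH$-filtration degree $1$ and $\rho A$ degree $0$, so $\Cd\,\rho A$ has filtration degree at most $1$; for it to drop to $0$, its degree-$1$ associated graded symbol must vanish in $(V\otimes \bbc G)\otimes \Cc$.

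The one computational step is to show that this symbol is nonzero whenever $A\neq 0$. Writing $\rho A = \sum_{g\in G} g\otimes C_g$ with $C_g\in\Cc$, one has $\Cd\,\rho A = \sum_{g,i} v_i g \otimes v^i C_g$, and its degree-$1$ symbol in $V\otimes \bbc G\otimes \Cc$ (through the PBW normal form for $\gr\bbH = S(V)\ltimes \bbc G$) is $\sum_g\sum_i v_i\otimes g\otimes v^i C_g$. Linear independence of $\{g\}_{g\in G}$ in $\bbc G$ and of $\{v_i\}$ in $V$ forces $v^i C_g = 0$ in $\Cc$ for every $i$ and every $g$. Since each $v^i$ is a unit vector, it is invertible in $\Cc$, so $C_g = 0$ for all $g$, whence $\rho A = 0$ and $A=0$, as required. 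The main (mild) obstacle is simply isolating this symbol computation cleanly inside the semidirect-product PBW normal form; the rest is formal bookkeeping of parity and filtration degree against Theorem \ref{t:Dsquare} and Corollary \ref{c:sgncent}.
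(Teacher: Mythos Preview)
Your proof is correct, but it takes a genuinely different route from the paper's.

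The paper's argument is a pure parity clash: if $\D + \rho A$ is a Parthasarathy operator from the family in Definition~\ref{d:popfam}, every group element appearing in $A$ has $\sgn = -1$; if it is a Vogan operator, then by Theorem~\ref{t:vadmiss} every group element has $\sgn = +1$; hence $A = 0$. This is quicker, but it leans on the (unstated) converse that any $A$ making $\D + \rho A$ Parthasarathy must be $P$-admissible, which the paper does not actually prove.

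You instead use the Vogan condition to pin down $\sgn = +1$ (as the paper does), but then work directly from the abstract Parthasarathy square condition: you expand $(\D + \rho A)^2$, isolate the cross term $\D\rho A$, and kill it by a Clifford-parity plus $\bbH$-filtration/symbol argument. This is longer, but it is fully self-contained from Definition~\ref{d:poperator} and does not require the missing converse characterisation. One small point: in step~4 your phrasing is a little loose, since a priori one could have $z_1 = -(z_2)_{\bar 0}$ rather than $z_1 = 0$; but since $\D\rho A$ is purely odd it equals the odd part of $z_2$, which already lies in $Z(\rho\bbc\widetilde{G})$, so your conclusion stands.
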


\begin{proof}
Suppose that  $\Cd + \rho A_{\widetilde{G}}$ is a Parthasarathy operator,  by construction $\sgn$ of every group element in $A_{\tilde{g}}$ is $-1$. Now suppose that $\Cd + \rho A_{\widetilde{G}}$ is a warped Dirac operator, then by Theorem \ref{t:vadmiss}, $\sgn$ of every group element is $1$. Therefore $A_{\widetilde{G}}$ has no nonzero coefficient of group elements and hence must be zero.

\end{proof}

\section*{Acknowledgements}
This research was supported by the Heilbronn Institute for Mathematical Research. 
\bibliographystyle{abbrv}

\bibliography{references}

\small 

\noindent Kieran Calvert, \texttt{kieran.calvert@lancaster.ac.uk}\\
Department of Mathematics and Statistics, 
Lancaster University,
Bailrigg, Lancaster, UK\\

\end{document}